\newcommand{\bibsortkey}[1]{}
\tikzset{symbol/.style={draw=none,every to/.append style={
      edge node={node [sloped, allow upside down, auto=false]{$#1$}}}}}
\newcommand{\ardual}[1]{\ar[#1, <->,squiggly]}
\newcommand{\arrestrict}{\ar[u, symbol=\leqslant]}
\newcommand{\areqv}[1]{\ar[#1, <->]}
\newcommand{\arrestrictright}{\ar[r, symbol=\geqslant]}
\newtheorem{theorem}{Theorem}[section]
\newtheorem{lemma}[theorem]{Lemma}
\newtheorem{corollary}[theorem]{Corollary}
\theoremstyle{definition}
\newtheorem{definition}[theorem]{Definition}
\newtheorem{remark}[theorem]{Remark}
\newcommand{\upset}{\mathord\uparrow}
\newcommand{\downset}{\mathord\downarrow}
\newcommand{\cl}{\mathop{\sf cl}}
\renewcommand{\int}{\mathop{\sf int}}
\DeclareMathOperator{\reg}{reg}
\DeclareMathOperator{\core}{core}
\DeclareMathOperator{\cen}{cen}
\newcommand{\cat}[1]{{\sf #1}\xspace}
\newcommand{\Top}{\cat{Top}}
\newcommand{\Sob}{\cat{Sob}}
\newcommand{\LCSob}{\cat{LKSob}}
\newcommand{\SLCSp}{\cat{StLKSp}}
\newcommand{\SKSp}{\cat{StKSp}}
\newcommand{\KHaus}{\cat{KHaus}}
\newcommand{\KBSob}{\cat{KBSob}}
\newcommand{\SKBSp}{\cat{StKBSp}}
\newcommand{\Spec}{\cat{Spec}}
\newcommand{\Stone}{\cat{Stone}}
\newcommand{\Frm}{\cat{Frm}}
\newcommand{\SFrm}{\cat{SFrm}}
\newcommand{\CFrm}{\cat{ConFrm}}
\newcommand{\SCFrm}{\cat{StCFrm}}
\newcommand{\SKFrm}{\cat{StKFrm}}
\newcommand{\KRFrm}{\cat{KRFrm}}
\newcommand{\AlgFrm}{\cat{AlgFrm}}
\newcommand{\ArithFrm}{\cat{AriFrm}}
\newcommand{\CohFrm}{\cat{CohFrm}}
\newcommand{\StoneFrm}{\cat{StoneFrm}}
\newcommand{\LPries}{\cat{LPries}}
\newcommand{\SL}{\cat{SLPries}}
\newcommand{\CL}{\cat{ConLPries}}
\newcommand{\SCL}{\cat{StCLPries}}
\newcommand{\SKL}{\cat{StKLPries}}
\newcommand{\KRL}{\cat{KRLPries}}
\newcommand{\AlgL}{\cat{AlgLPries}}
\newcommand{\ArithL}{\cat{AriLPries}}
\newcommand{\CohL}{\cat{CohLPries}}
\newcommand{\StoneL}{\cat{StoneLPries}}
\newcommand{\functor}[1]{\mathscr #1}
\newcommand{\clopup}{{\sf ClopUp}}
\newcommand{\clopbi}{{\sf ClopBi}}
\newcommand{\clopsup}{{\sf ClopSUp}}
\crefname{figure}{Figure}{Figures}
\setlist[enumerate,1]{label={\upshape(\arabic*)}}
\title{Algebraic Frames in Priestley duality}
\patchcmd{\@setaddresses}{\indent}{\noindent}{}{}
\patchcmd{\@setaddresses}{\indent}{\noindent}{}{}
\patchcmd{\@setaddresses}{\indent}{\noindent}{}{}
\patchcmd{\@setaddresses}{\indent}{\noindent}{}{}
\keywords{Pointfree topology, Priestley duality, algebraic frame, 
coherent frame, Stone frame, spectral space, 
Stone space}
\subjclass[2020]{18F70; 06D22; 06D50; 06E15; 18F60}
\author{G.~Bezhanishvili and S.~Melzer}
\address{\newline
Department of Mathematical Sciences\newline
New Mexico State University\newline
Las Cruces, NM 88003\newline
USA\newline}
\email{guram@nmsu.edu}
\email{smelzer@nmsu.edu}
\newcommand{\plabel}[1]
{
    \label{#1}
    \edef\curlabel{#1}
    \edef\curenv{\@currenvir}
}
\newcommand{\clabel}[1]
{   
    \label[\curenv]{\curlabel-#1}
}
\begin{document}

\begin{abstract}
    We characterize Priestley spaces of algebraic, arithmetic, coherent, and Stone frames. As a corollary, we derive the well-known dual equivalences in pointfree topology involving various categories of algebraic frames.
\end{abstract}

\maketitle

\tableofcontents

\section{Introduction} \label{sec:intro}

A complete lattice is algebraic provided every element is a join of compact elements. 
Algebraic lattices arise naturally in different contexts. For example, 
the lattice of subalgebras as well as the lattice of congruences of any algebra is algebraic, and up to isomorphism, every algebraic lattice arises this way (see, e.g., \cite{BS81}). It is a well-known result of Nachbin \cite{Nachbin1949} (see also \cite{BirkhoffFrink1948}) that algebraic lattices are exactly the ideal lattices of join-semilattices. If an algebraic lattice $L$ is distributive, then the infinite distributive law $a\wedge\bigvee S=\bigvee\{a\wedge s\mid s\in S\}$ holds, and hence $L$ is a frame. Such frames are known as algebraic frames and have been the subject of study in pointfree topology and domain theory (see, e.g., \cite{Compendium2003,PicadoPultr2012}).

There is a well-developed duality theory for the category \AlgFrm of algebraic frames and its various subcategories 
such as the categories of arithmetic frames (also known as M-frames), coherent frames, and Stone frames. Indeed, a frame $L$ is algebraic iff it is the frame of opens of a compactly based sober space $X$ \cite[p.~423]{Compendium2003}.
In addition, $L$ is arithmetic iff $X$ is stably compactly based, $L$ is coherent iff $X$ is spectral, and $L$ is a Stone frame iff $X$ is a Stone space (see \cref{sec: prelims} for details). 

The duality theory for algebraic frames is a restriction of the well-known Hofmann--Lawson duality \cite{HofmannLawson1978}. We recall (see, e.g., \cite[p.~135]{PicadoPultr2012}) that a frame $L$ is {\em continuous} if the way-below relation $\ll$ is approximating. In addition, $L$ is {\em stably continuous} if $\ll$ is stable ($a\ll b,c$ implies $a\ll b\wedge c$), $L$ is {\em stably compact} if moreover $L$ is compact, and $L$ is {\em compact regular} if furthermore $\ll$ coincides with the well-inside relation $\prec$. We thus obtain the following correspondence between various categories of continuous and algebraic frames, where the categories are defined in \cref{table:con frames,table:alg frames} and $\leqslant$ stands for being a full subcategory of.
\
\begin{figure}[H] 
\centering
\begin{tikzcd}[ampersand replacement=\&, column sep={6em,between origins}, row sep=2em]
    \CFrm \ar[r, symbol=\geqslant] 
        \& \SCFrm \ar[r, symbol=\geqslant]
        \& \SKFrm \ar[r, symbol=\geqslant]
        \& \KRFrm \\
    \AlgFrm \arrestrict \ar[r, symbol=\geqslant]
        \& \ArithFrm \arrestrict \ar[r, symbol=\geqslant]
        \& \CohFrm \arrestrict \ar[r, symbol=\geqslant]
        \& \StoneFrm \arrestrict
\end{tikzcd} 
\caption{Inclusion relationships between categories of continuous and algebraic frames. \label{diagram: CFrmAlgFrm}}
\end{figure}

By the well-known Priestley duality \cite{Priestley1970,Priestley1972}, the category of bounded distributive lattices is dually equivalent to the category of Priestley spaces. 
Pultr and Sichler \cite{PultrSichler1988} provided a restricted version of Priestley duality for the category \Frm of frames and frame homomorphisms. This line of research was further developed by several authors (see, e.g.,  \cite{PultrSichler2000,BezhGhilardi2007,BezhanishviliGabelaiaJibladze2013,BezhanishviliGabelaiaJibladze2016,AvilaBezhanishviliMorandiZaldivar2020,AvilaBezhanishviliMorandiZaldivar2021}). In \cite{BezhanishviliMelzer2022b}, we obtained Priestley duality for \CFrm and its subcategories listed in the first row of Figure~\ref{diagram: CFrmAlgFrm}. The resulting (dual) equivalences are outlined in \cref{diagram: equivalences}.
The aim of this paper is to further study Priestley duality for \AlgFrm and its subcategories listed in the second row of \cref{diagram: CFrmAlgFrm}. This in requires characterizing Priestley spaces of algebraic, coherent, arithmetic, and Stone frames. 

The paper is organized as follows. 
In \cref{sec: prelims}, we describe the above
categories of continuous and algebraic frames, as well as the corresponding categories of locally compact and compactly based sober spaces. \cref{sec 3} recalls Priestley duality for
various categories of continuous frames. 
In \cref{sec: alg frm}, we characterize
Priestley spaces of algebraic frames. Consequently, we obtain a new proof of the duality between \AlgFrm and \KBSob. Finally, in \cref{sec 5}, we 
characterize Priestley spaces of arithmetic, coherent, and Stone frames. In each case, this yields a new proof of the 
duality between the 
corresponding 
categories of algebraic frames and compactly based spaces. We conclude the paper by connecting Priestley spaces of coherent frames and Stone frames to Priestley duality for bounded distributive lattices and Stone duality for boolean algebras.
\color{black}

\section{Continuous and algebraic frames} \label{sec: prelims}
A \emph{frame} is a complete lattice $L$ satisfying the join-infinite distributive law
\[
    a \wedge \bigvee S = \bigvee\{a \wedge s \mid s \in S\}
\]
for every $a \in L$ and $S \subseteq L$. A \emph{frame homomorphism} is a map between frames that preserves finite meets and arbitrary joins. Let \Frm be the category of frames and frame homomorphisms. 
A frame is \emph{spatial} if completely prime filters separate elements of $L$. 
Let \SFrm be the full subcategory of \Frm consisting of spatial frames. 

As usual, we write $\ll$ for the {\em way below} relation in a frame $L$ and recall that $a\ll b$ provided for each $S\subseteq L$ we have $b\le\bigvee S$ implies $a\le \bigvee T$ for some finite $T\subseteq S$. We call $a\in L$ \emph{compact} if $a \ll a$ and $L$ \emph{compact} if its top element is compact. We write $K(L)$ for the collection of compact elements of $L$.

In the introduction we recalled the definitions of continuous, stably continuous, stably compact, and compact regular frames. A frame homomorphism $h : L \to M$ between continuous frames is \emph{proper} if it preserves $\ll$; that is, $a \ll b$ implies $h(a) \ll h(b)$ for all $a,b \in L$. Let \CFrm be the category of continuous frames and proper frame homomorphisms. We write \SCFrm and \SKFrm for the full subcategories of \CFrm consisting of stably continuous and stably compact frames, respectively. We also let \KRFrm be the full subcategory of \Frm consisting of compact regular frames. Since every frame homomorphism between compact regular frames is proper, \KRFrm is a full subcategory of \SKFrm. 

We have the following categories of continuous frames.

\begin{table}[H]
\begin{tabular}{lll}
    \toprule 
    \multicolumn{1}{c}{\bf Category} & \multicolumn{1}{c}{\bf Objects} & \multicolumn{1}{c}{\bf Morphisms} \\ \midrule
    \CFrm  & continuous frames        & proper frame homomorphisms \\
    \SCFrm & stably continuous frames & proper frame homomorphisms \\
    \SKFrm & stably compact frames    & proper frame homomorphisms \\ 
    \KRFrm & compact regular frames   & frame homomorphisms \\
    \bottomrule
\end{tabular}
\caption{Categories of continuous frames.\label{table:con frames}}
\end{table}

\begin{definition}
\begin{enumerate}
    \item[]
    \item (\cite[p.~142]{PicadoPultr2012}) A frame $L$ is \emph{algebraic} if $a = \bigvee\{b \in K(L) \mid b \leq a\}$ for all $a \in L$.
    \item (\cite[p.~64]{Johnstone1982}) A frame homomorphism $h : L \to M$ is \emph{coherent} if $a \in K(L)$ implies $h(a) \in K(M)$.
    \item Let \AlgFrm be the category of algebraic frames and coherent frame homomorphisms.
\end{enumerate} 
\end{definition}

\begin{remark} \label{rem: full sub alg}
    It is easy to see that every algebraic frame is continuous, and that
    a frame homomorphism between coherent frames is coherent iff it is proper.  Consequently, \AlgFrm is a full subcategory of \CFrm.
\end{remark}

\begin{definition}
\begin{enumerate}
    \item[]
    \item A frame $L$ is \emph{arithmetic} if it is algebraic and $\ll$ is stable.
    \item Let \ArithFrm be the full subcategory of \AlgFrm consisting of arithmetic frames. 
\end{enumerate}
\end{definition}

\begin{remark}
    \begin{enumerate}
        \item[]
        \item 
        In \cite{Compendium2003} a lattice is called arithmetic if the binary meet of compact elements is compact. For algebraic lattices this is equivalent to $\ll$ being stable (see, e.g. \cite[Proposition I-4.8]{Compendium2003}).
        \item Arithmetic frames 
        are also called M-frames; see, e.g.,~\cite{IberkleidMcGovern2009,Bhattacharjee2018}.
    \end{enumerate}
\end{remark}

\begin{definition}
    \begin{enumerate}
    \item[]
    \item (\cite[p.~63--64]{Johnstone1982}) A frame $L$ is \emph{coherent} if $L$ is arithmetic and compact.
    \item Let \CohFrm be the full subcategory of \ArithFrm consisting of coherent frames.
    \end{enumerate}
\end{definition}

Let $L$ be a frame. We recall that the \emph{well-inside} relation on $L$ is defined by $a \prec b$ if $a^* \vee b = 1$, where $a^*:=\bigvee \{x\in L\mid a\wedge x=0\}$ is the pseudocomplement of $a$. An element $a\in L$ is \emph{complemented} if $a \prec a$. Let $C(L)$ be the collection of  complemented elements of $L$. 
It is well known that if $L$ is compact, then $a\prec b$ implies $a\ll b$; and if $L$ is regular, then $a\ll b$ implies $a\prec b$. Thus, in compact regular frames, the two relations $\ll$ and $\prec$ coincide, and hence $K(L)=C(L)$. 

The next definition is well known (see, e.g., \cite{Johnstone1982,Banaschewski1989,Jakl2013}). We thank Joanne Walters-Wayland for pointing out to us that the terminology of Stone frames originated from Banaschewski's University of Cape Town lecture notes (1988).

\begin{definition}
    \begin{enumerate}
        \item[] 
        \item 
        A frame $L$ is \emph{zero-dimensional} if $a = \bigvee \{b \in C(L) \mid b \leq a\}$ for all $a \in L$.
        \item 
        A {\em Stone frame} is a compact zero-dimensional frame.
        \item Let \StoneFrm be the full subcategory of \Frm consisting of Stone frames.
    \end{enumerate}
\end{definition}

\begin{remark} \label{rem: full sub stonefrm}
    Clearly \StoneFrm is a full subcategory of \KRFrm.
    Moreover, since every frame homomorphism preserves $\prec$ and in Stone frames $\prec$ coincides with $\ll$, we have that \StoneFrm is a full subcategory of \CohFrm.
\end{remark}

We have the following categories of algebraic frames.

\begin{table}[H]
\begin{tabular}{lll}
    \toprule 
    \multicolumn{1}{c}{\bf Category} & \multicolumn{1}{c}{\bf Objects} & \multicolumn{1}{c}{\bf Morphisms} \\ \midrule
    \AlgFrm  & algebraic frames       & coherent frame homomorphisms \\
    \ArithFrm  & arithmetic frames    & coherent frame homomorphisms \\
    \CohFrm  & coherent frames        & coherent frame homomorphisms \\
    \StoneFrm & Stone frames          & frame homomorphisms \\
    \bottomrule
\end{tabular}
\caption{Categories of algebraic frames.\label{table:alg frames}}
\end{table}
The categories of algebraic and continuous frames relate to each other as shown in \cref{diagram: CFrmAlgFrm}.
We next turn our attention to the corresponding categories of topological spaces. The following definitions are well known (see, e.g., \cite[pp.~43--44]{Compendium2003}). A closed subset of a topological space $X$ is {\em irreducible} if it cannot be written as the union of two proper subsets. We call $X$ \emph{sober} if each irreducible closed subset is the closure of a unique point in $X$, and \emph{locally compact} if for every open set $U$ and $x \in U$ there are an open set $V$ and a compact set $K$ such that $x \in V \subseteq K \subseteq U$. 

Following \cite[Lemma VI-6.21]{Compendium2003}, we call a continuous map $f : X \to Y$ 
between locally compact sober spaces
\emph{proper} if $f^{-1}(K)$ is compact for each compact saturated set $K \subseteq Y$.
Let \LCSob be the category of locally compact sober spaces and proper maps between them. 

A topological space $X$ is \emph{coherent} if the intersection of two compact saturated sets is compact (\cite[p.~474]{Compendium2003}), and $X$ is \emph{stably locally compact} if it is locally compact, sober, and coherent. Let \SLCSp be the full subcategory of \LCSob consisting of stably locally compact spaces. 

A compact stably locally compact space is a \emph{stably compact} space (\cite[p.~476]{Compendium2003}). We write \SKSp for the full subcategory of \SLCSp consisting of stably compact spaces. Also, we denote by \KHaus the category of compact Hausdorff spaces and continuous maps. Since a continuous map between compact Hausdorff spaces is proper, \KHaus is a full subcategory of \SKSp.

We have the following categories of locally compact sober spaces.

\begin{table}[H]
\begin{tabular}{lll}
    \toprule 
    \multicolumn{1}{c}{\bf Category} & \multicolumn{1}{c}{\bf Objects} & \multicolumn{1}{c}{\bf Morphisms} \\ \midrule
    \LCSob & locally compact sober spaces  & proper maps \\
    \SLCSp & stably locally compact spaces & proper maps \\
    \SKSp  & stably compact spaces         & proper maps \\ 
    \KHaus & compact Hausdorff spaces      & continuous maps\\
    \bottomrule
\end{tabular}
\caption{Categories of locally compact sober spaces.\label{table:lcsob spaces}}
\end{table}

We now shift our focus to compactly based spaces. We recall that a continuous map $f : X \to Y$ 
is {\em coherent} if $f^{-1}(U)$ is compact for each compact open $U \subseteq Y$.

\begin{definition} 
\begin{enumerate}[ref=\thedefinition(\arabic*)]
\item[] 
\item (\cite[p.~2063]{Erne2009}) A topological space $X$ is \emph{compactly based} if it has a basis of compact open sets. Let \KBSob be the category of compactly based sober spaces and coherent maps. 
\item A compactly based space $X$ is \emph{stably compactly based} if it is sober and the intersection of two compact opens is compact. Let \SKBSp be the full subcategory of \KBSob consisting of stably compactly based spaces.
\item (\cite[p.~43]{Hochster1969}) A stably compactly based space $X$ is a \emph{spectral} space if it is compact. Let \Spec be the full subcategory of \SKBSp consisting of spectral spaces.
\item (\cite[p.~70]{Johnstone1982}) A \emph{Stone} space is a zero-dimensional compact Hausdorff space. Let \Stone be the category of Stone spaces and continuous maps.
\end{enumerate}
\end{definition}

We have the following categories of compactly based sober spaces.
 
\begin{table}[H]
\begin{tabular}{lll}
    \toprule 
    \multicolumn{1}{c}{\bf Category} & \multicolumn{1}{c}{\bf Objects} & \multicolumn{1}{c}{\bf Morphisms} \\ \midrule
    \KBSob & compactly based sober spaces & coherent maps\\
    \SKBSp & stably compactly based spaces & coherent maps \\
    \Spec & spectral spaces                & coherent maps\\
    \Stone & Stone spaces                  & continuous maps \\
    \bottomrule
\end{tabular}
\caption{Categories of compactly based sober spaces.\label{table:kb spaces}}
\end{table}

\begin{remark} \label{rem: comp sat} \label{rem: full sub kbsob} \label{rem: full sub stone}
It is 
easy to see that \Stone is a full subcategory of \Spec (see, e.g., \cite[p.~71]{Johnstone1982}). To see that \KBSob is a full subcategory of \LCSob, it is sufficient to observe that a continuous map between compactly based sober spaces is coherent iff it is proper. For this it is enough to observe that
in a compactly based space $X$, every compact saturated set is an intersection of compact opens. 
To see this, let $K \subseteq X$ be compact saturated. It suffices to show that for each $x \not \in K$ there is a compact open $U$ containing $K$ and missing $x$. 
For each $y \in K$ there is a compact open $U_y$ such that $y \in U_y$ and $x \not \in U_y$. Therefore, $K \subseteq \bigcup \{U_y \mid y \in K\}$. By compactness of $K$ and the fact that a finite union of compact sets is compact, there is a compact open $U$ such that $K \subseteq U$ and $x \not \in U$.
\end{remark}

We thus obtain the following correspondence between various categories of locally compact and compactly based sober spaces.
\begin{figure}[H] 
\centering
\begin{tikzcd}[ampersand replacement=\&, column sep={6em,between origins}, row sep=2em]
    \LCSob \ar[r, symbol=\geqslant] 
        \& \SLCSp \ar[r, symbol=\geqslant]
        \& \SKSp \ar[r, symbol=\geqslant]
        \& \KHaus \\
    \KBSob \arrestrict \ar[r, symbol=\geqslant]
        \& \SKBSp \arrestrict \ar[r, symbol=\geqslant]
        \& \Spec \arrestrict \ar[r, symbol=\geqslant]
        \& \Stone \arrestrict
\end{tikzcd} 
\caption{Inclusion relationships between categories of locally compact and compactly based sober spaces. \label{diagram: LCSobKBSob}}
\end{figure}

There is a well-known dual adjunction between \Top and \Frm, 
which restricts to a dual equivalence between \Sob and \SFrm (see, e.g., \cite[Section II-1]{Johnstone1982}). Further restrictions of this equivalence yield the following well-known duality results for continuous frames:

\begin{theorem} \plabel{cfrm-dualities}
    \begin{enumerate}[ref=\thetheorem(\arabic*)]
        \item[]
        \item 
        \CFrm is dually equivalent to \LCSob. \clabel{1}
        \item \SCFrm is dually equivalent to \SLCSp. \clabel{2}
        \item \SKFrm is dually equivalent to \SKSp. \clabel{3}
        \item 
        \KRFrm is dually equivalent to \KHaus. \clabel{4}
    \end{enumerate}
\end{theorem}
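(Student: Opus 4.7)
The plan is to obtain all four equivalences as successive restrictions of the classical Isbell dual adjunction between \Top and \Frm. Recall that this adjunction is given by the open-set functor $\Omega : \Top \to \Frm^{\mathrm{op}}$ and the points functor $\mathsf{pt} : \Frm^{\mathrm{op}} \to \Top$, where $\mathsf{pt}(L)$ is the set of completely prime filters of $L$ topologized by basic opens $\varphi(a) = \{P \mid a \in P\}$, and that the adjunction restricts to a dual equivalence between \Sob and \SFrm. The task is therefore to identify, on each side of this equivalence, the objects and morphisms of interest and to check that the (co)restriction of $\Omega$ and $\mathsf{pt}$ is well defined in each of the four cases.

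On the object side, the key classical input is the Hofmann--Lawson theorem: a frame $L$ is continuous iff it is isomorphic to $\Omega(X)$ for some locally compact sober space $X$, and in that case $X \cong \mathsf{pt}(L)$. In particular every continuous frame is spatial, so \CFrm sits inside \SFrm, which makes (1) a restriction of the Sob/SFrm duality once one checks that $\Omega(X)$ is continuous for every locally compact sober $X$ (which follows from the definition of local compactness via the way-below relation on $\Omega(X)$). Statements (2)--(4) then follow by identifying the extra conditions on $L$ with the corresponding conditions on $X = \mathsf{pt}(L)$: stability of $\ll$ translates to coherence of $X$ (intersections of compact saturated sets are compact), compactness of $L$ translates to compactness of $X$, and in the compact regular case one uses that $\prec\,=\,\ll$ forces $\mathsf{pt}(L)$ to be Hausdorff (disjoint compact saturated sets can be separated by opens coming from a $\prec$-interpolating pair), while conversely $\Omega(X)$ is compact regular for any compact Hausdorff $X$.

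On the morphism side, I would verify that under $\mathsf{pt}$ a proper frame homomorphism $h : L \to M$ between continuous frames is sent to a proper continuous map, and that conversely a proper map $f : X \to Y$ between locally compact sober spaces induces a frame homomorphism $f^{-1}: \Omega(Y) \to \Omega(X)$ that preserves $\ll$. This is the standard reformulation used, e.g., in \cite[Lemma VI-6.21]{Compendium2003}: $h$ preserves $\ll$ iff its right adjoint preserves directed joins, and under the Hofmann--Lawson equivalence this right adjoint corresponds to the direct-image map of a proper continuous function. In the compact regular case one observes the further simplification that every frame homomorphism between compact regular frames automatically preserves $\prec$ and hence $\ll$, so no properness condition on the morphisms is needed on either side; this matches the fact that every continuous map between compact Hausdorff spaces is automatically proper.

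The main obstacle is really concentrated in case (1): one needs the nontrivial Hofmann--Lawson spatiality result for continuous frames and the matching of proper frame maps with proper continuous maps; once these are in hand, (2)--(4) follow by routine checking that the extra properties transfer along $\Omega$ and $\mathsf{pt}$, as the conditions (stability of $\ll$, compactness, coincidence of $\ll$ and $\prec$) have transparent topological translations. Since these facts are standard and appear in \cite{Johnstone1982,Compendium2003,HofmannLawson1978,PicadoPultr2012}, I would present the theorem as a consequence of this literature rather than redoing the Hofmann--Lawson argument in detail.
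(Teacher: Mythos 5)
Your proposal is correct and matches the paper's treatment: the paper presents these four dualities as known restrictions of the classical \Top/\Frm adjunction (equivalently of the \Sob--\SFrm duality), citing Hofmann--Lawson, the stably (locally) compact literature, and Isbell rather than giving a new argument, which is exactly the route you outline.
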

We thus arrive at the following diagram, where $\leftrightsquigarrow$ represents dual equivalence.
\begin{figure}[H]
\begin{center}
\begin{tikzcd}[ampersand replacement=\&, column sep={6em,between origins}, row sep=2em]
    \CFrm \ardual{d} \arrestrictright 
        \& \SCFrm \ardual{d} \arrestrictright 
        \& \SKFrm \ardual{d} \arrestrictright 
        \& \KRFrm \ardual{d}\\
    \LCSob \arrestrictright 
        \& \SLCSp \arrestrictright 
        \& \SKSp \arrestrictright 
        \& \KHaus
\end{tikzcd}
\end{center}
\caption{Correspondence between categories of continuous frames and locally compact spaces.} 
\end{figure}

\begin{remark}
\cref{cfrm-dualities-1} is known as Hofmann--Lawson duality \cite{HofmannLawson1978} (see also \cite[Proposition V-5.20]{Compendium2003}). 
The origins of \cref{cfrm-dualities-2,cfrm-dualities-3} can be traced back to \cite{GierzKeimel1977,Johnstone1981,Simmons1982,Banaschewski1981} (see also \cite[Section VI-7.4]{Compendium2003}). Finally, \cref{cfrm-dualities-4} is known as Isbell duality \cite{Isbell1972} (see also \cite{BanaschweskiMulvey1980} or \cite[Section VII-4]{Johnstone1982}).
\end{remark}

We next describe the duality results for algebraic frames. One of the earliest references is probably
\cite[Theorem 5.7]{HofmannKeimel1972} (see also \cite[p.~423]{Compendium2003}), 
where the dualities for \AlgFrm, \ArithFrm, and \CohFrm are stated. The duality for \CohFrm is also described in 
\cite{Banaschewski1980,Banaschewski1981}. This further reduces to the duality for \StoneFrm (see, e.g., 
\cite[Chapter IV]{Jakl2013}). 

\begin{theorem} \plabel{frm-dualities}
    \begin{enumerate}[ref=\thetheorem(\arabic*)]
        \item[]
        \item \AlgFrm is dually equivalent to \KBSob. \clabel{alg}
        \item \ArithFrm is dually equivalent to \SKBSp. \clabel{arith}
        \item \CohFrm is dually equivalent to \Spec. \clabel{coh}
        \item \StoneFrm is dually equivalent to \Stone. \clabel{stone}
    \end{enumerate}
\end{theorem}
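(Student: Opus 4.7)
The plan is to deduce each of (1)--(4) as a restriction of the Hofmann--Lawson duality of \cref{cfrm-dualities-1}. Indeed, \cref{rem: full sub alg} exhibits \AlgFrm as a full subcategory of \CFrm, \cref{rem: full sub kbsob} exhibits \KBSob as a full subcategory of \LCSob, and the analogous full-subcategory inclusions hold for the remaining three pairs (by \cref{rem: full sub stonefrm} and \cref{rem: full sub stone}). It therefore suffices to verify that the Hofmann--Lawson functors---call them $\Sigma \colon \CFrm \to \LCSob$ and $\Omega \colon \LCSob \to \CFrm$---restrict to bijections on objects between the corresponding pairs of subcategories, with the morphisms taking care of themselves by fullness.

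The cornerstone is the following dictionary: for any continuous (hence spatial) frame $L$, the canonical isomorphism $L \cong \Omega(\Sigma L)$ identifies $K(L)$ with the collection of compact open subsets of $\Sigma L$. Granting this, part (1) is immediate, since $L$ is algebraic iff every element of $L$ is a join of compact elements iff every open of $\Sigma L$ is a union of compact opens iff $\Sigma L$ is compactly based; the converse direction, starting from a compactly based sober $X$ and observing that the compact elements of $\Omega X$ are precisely the compact open sets, is symmetric. For the morphism side, the frame homomorphism $\Omega f$ dual to a continuous $f \colon X \to Y$ is coherent precisely when $f^{-1}$ carries compact opens to compact opens, which by \cref{rem: comp sat} is equivalent to $f$ being proper, so the morphism classes agree.

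For part (2), the remark following the definition of \ArithFrm records that in an algebraic frame stability of $\ll$ is equivalent to $K(L)$ being closed under binary meet. Via the dictionary this translates exactly to the compact opens of $\Sigma L$ being closed under binary intersection, i.e., $\Sigma L$ is stably compactly based. Part (3) then follows by additionally demanding compactness of $L$, which is equivalent to $1_L \in K(L)$ and hence to $\Sigma L$ being compact. For part (4), within (3) we observe that zero-dimensionality of a compact frame $L$ forces $K(L) = C(L)$; since $C(L)$ is always a Boolean sublattice, transporting across the dictionary makes the compact opens of $\Sigma L$ form a basis of clopens, yielding a Stone space. The morphism matching for (4) is automatic, since every frame homomorphism between Stone frames is coherent and every continuous map between Stone spaces is proper.

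The main obstacle is establishing the cornerstone dictionary between $K(L)$ and the compact opens of $\Sigma L$, together with its morphism-level analogue matching coherent frame homomorphisms with coherent continuous maps. This requires a careful point-set argument using the completely prime filter description of $\Sigma L$ and the characterization of compact saturated sets in compactly based spaces spelled out in \cref{rem: comp sat}. Once the dictionary is in hand, each of (1)--(4) reduces to translating a lattice-theoretic condition on $K(L)$ into the corresponding topological condition on the compact opens of $\Sigma L$, together with a routine check that the morphisms match up.
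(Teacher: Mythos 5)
Your proposal is correct and takes essentially the same route the paper itself indicates for this theorem: deduce each duality by restricting \cref{cfrm-dualities} along the full-subcategory inclusions of \cref{rem: full sub alg,rem: full sub kbsob,rem: full sub stonefrm,rem: full sub stone}, the only substantive input being the standard identification of $K(L)$ with the compact open subsets of the space of points (a routine check for spatial, hence for continuous, frames), which then translates algebraicity, stability of $\ll$, compactness, and zero-dimensionality into the compactly based, stably compactly based, spectral, and Stone conditions. The paper's own new derivation of these dualities via Priestley spaces comes only later (\cref{sec: alg frm,sec 5}) and is a genuinely different route, but as a proof of this statement your argument matches the one the paper sketches in the remark following it.
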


We thus arrive at the following diagram.

\begin{figure}[H]
\begin{center}
\begin{tikzcd}[ampersand replacement=\&, column sep={6em,between origins}, row sep=2em]
    \AlgFrm \ardual{d} \arrestrictright 
        \& \ArithFrm \ardual{d} \arrestrictright 
        \& \CohFrm \ardual{d} \arrestrictright 
        \& \StoneFrm \ardual{d}\\
    \KBSob \arrestrictright 
        \& \SKBSp \arrestrictright 
        \& \Spec \arrestrictright 
        \& \Stone
\end{tikzcd}
\end{center}
\caption{Correspondence between categories of algebraic frames and compactly based spaces.} \label{diagram: introduction}
\end{figure}

\begin{remark}
    The proof of \cref{frm-dualities} can easily be deduced from \cref{cfrm-dualities} and the fact that \AlgFrm and \KBSob are full subcategories of \CFrm and \LCSob, respectively. But it is easy to give a direct proof of \cref{frm-dualities} which does not rely on \cref{cfrm-dualities}. For this it is sufficient to observe that every algebraic frame is spatial. 
    Let $L$ be an algebraic frame. 
    Then Scott-open filters separate elements of $L$. To see this, if $a\not\le b$, then there is $k\in K(L)$ such that $k\le a$ but $k\not\le b$. Thus, $\upset k$ is a Scott-open filter containing $a$ and missing $b$. It is left to observe that the Prime Ideal Theorem implies that $L$ is spatial iff Scott-open filters separate elements of $L$ (see, e.g., \cite[Corollary 5.9(2)]{BezhanishviliMelzer2022}).
\end{remark}

\section{Priestley duality for continuous frames} \label{sec 3}

As we pointed out in the Introduction, Pultr and Sichler \cite{PultrSichler1988} restricted Priestley duality
for bounded distributive lattices to the category of frames. 
In this section we briefly recall Pultr--Sichler duality and its restriction to various categories of continuous frames. 

A \emph{Priestley space} is a Stone space $X$ with a partial order $\leq$ such that clopen upsets separate points. 
An \emph{Esakia space} is a Priestley space with the additional property that the partial order $\le$ is continuous (the downset of each clopen is clopen). By Esakia duality \cite{Esakia1974}, Esakia spaces are exactly the Priestley spaces of Heyting algebras. An important feature of Esakia spaces is that the closure of each upset is an upset. Esakia duals of complete Heyting algebras have the additional property that the closure of each open upset is open. Such Esakia spaces are called \emph{extremally order-disconnected} as they generalize extremally disconnected Stone spaces. Since frames are complete Heyting algebras, Priestley duals of frames are exactly the extremally order-disconnected Esakia spaces.
\begin{definition}
    \begin{enumerate}
        \item[]
        \item An \emph{L-space} (\emph{localic space}) is an extremally order-disconnected Esakia space.
        \item An \emph{L-morphism} is a continuous order-preserving map $f : X \to Y$ between L-spaces such that $f^{-1}\cl U = \cl f^{-1}U$ for every open upset $U$ of $Y$.
        \item Let \LPries be the category of L-spaces and L-morphisms.
    \end{enumerate}
\end{definition}

\begin{theorem}[Pultr--Sichler {\cite[Corollary 2.5]{PultrSichler1988}}]
    \Frm is dually equivalent to \LPries.
\end{theorem}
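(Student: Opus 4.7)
The plan is to derive the theorem by restricting Priestley duality for bounded distributive lattices to the full subcategory $\LPries$ of L-spaces. The paragraph preceding the statement already pins down the object-level correspondence: frames are complete Heyting algebras, so their Priestley duals are exactly the extremally order-disconnected Esakia spaces, i.e.\ the L-spaces. What remains is to show that, under this correspondence, a bounded lattice homomorphism $h \colon L \to M$ between frames is a frame homomorphism if and only if the corresponding Priestley map $f \colon Y \to X$ between the duals is an L-morphism.

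My first step is to compute arbitrary joins in $\clopup(X)$ for an L-space $X$. Given a family $\{U_i\}$ of clopen upsets, their union is an open upset; by extremal order-disconnectedness its closure is open, hence clopen; and since $X$ is Esakia, the closure of an upset is again an upset. A direct check shows that $\cl(\bigcup_i U_i)$ is then the least clopen upset containing every $U_i$, so
\[
    \bigvee_i U_i \;=\; \cl\!\left(\bigcup_i U_i\right)
\]
in $\clopup(X)$. Identifying $L$ with the clopen upsets of its Priestley dual, the map $h$ corresponds to $U \mapsto f^{-1}(U)$. Since $h$ already preserves finite meets and joins, being a frame homomorphism reduces to the single requirement
\[
    f^{-1}\!\left(\cl\bigcup_i U_i\right) \;=\; \cl\bigcup_i f^{-1}(U_i)
\]
for every family $\{U_i\}$ of clopen upsets of $X$.

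Next I would show that this family of conditions is equivalent to the single L-morphism condition $f^{-1}(\cl U) = \cl f^{-1}(U)$ for every open upset $U$ of $X$. The forward direction uses the standard Priestley separation property that every open upset is the union of the clopen upsets it contains; applying the join equality to this family delivers the L-morphism condition. The converse is immediate, by specializing the L-morphism condition to $U = \bigcup_i U_i$.

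The main technical obstacle I anticipate is the description of joins in $\clopup(X)$: one needs all three ingredients --- the Esakia property (so that closures of upsets are upsets), extremal order-disconnectedness (so that the closure of an open upset is open), and Priestley separation by clopen upsets (so that an arbitrary open upset may be replaced by the family of clopens it contains) --- to interact correctly. Once the object- and morphism-level correspondences are in place, assembling them into dual functors and checking that the Priestley unit and counit restrict to natural isomorphisms on $\Frm$ and $\LPries$ is routine, and the theorem follows.
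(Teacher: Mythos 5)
Your proposal is correct and follows the same route the paper intends: the paper itself offers no proof beyond citing Pultr--Sichler and remarking that the functors are the restrictions of the Priestley functors, and your argument fills in exactly that restriction — the object level via the complete-Heyting-algebra/extremally order-disconnected Esakia correspondence, and the morphism level via the join formula $\bigvee_i U_i = \cl\bigl(\bigcup_i U_i\bigr)$ in $\clopup(X)$ together with the fact that every open upset of a Priestley space is a union of clopen upsets. The equivalence you establish between preservation of these joins by $f^{-1}$ and the L-morphism condition $f^{-1}\cl U = \cl f^{-1}U$ for open upsets is exactly the content of the Pultr--Sichler result as used here.
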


\begin{remark}
    The functors $\functor X : \Frm \to \LPries$ and $\functor{D} : \LPries \to \Frm$ establishing Pultr--Sichler duality are the restrictions of the functors establishing Priestley duality. We recall that the {\em Priestley space} of a frame $L$ is the set $X_L$ of prime filters of $L$ ordered by inclusion and topologized by the subbases $\{\varphi(a) \mid a \in L\} \cup \{\varphi(a)^c \mid a \in L\}$, where $\varphi$ is the Stone map given by $\varphi(a) = \{x \in X_L \mid a \in x\}$ for each $a \in L$. The functor $\functor X$ sends a frame $L$ to its Priestley space $X_L$ and a frame homomorphism $h : L \to M$ to the L-morphism $h^{-1} : X_M \to X_L$. The functor $\functor{D}$ sends an L-space $X$ to the frame $\clopup(X)$ of clopen upsets of $X$ and an L-morphism $f : X \to Y$ to the frame homomorphism $f^{-1} : \clopup(Y) \to \clopup(X)$.
\end{remark}
We next characterize Priestley spaces of spatial frames. 

\begin{definition}
    Let $X$ be an L-space. 
    \begin{enumerate}
        \item The set $Y := \{y \in X \mid \downset y \text{ is clopen}\}$ is called the \emph{spatial part of $X$}. 
        \item We call $X$ an \emph{SL-space} if 
        $Y$ is dense in $X$.
        \item Let \SL be the full subcategory of \LPries consisting of SL-spaces.
    \end{enumerate}
\end{definition}

Let $L$ be a frame. Recall (see, e.g., \cite[p.~15]{PicadoPultr2012}) that a \emph{point} of $L$ is a completely prime filter, and that the set $pt(L)$ of points of $L$ is topologized by $\{\varphi(a) \cap pt(L) \mid a \in L\}$. We will refer to $pt(L)$ as the \emph{space of points of $L$}.

\begin{remark}
    Let $X$ be an L-space and $Y$ the spatial part of $X$. 
    \begin{enumerate}[ref=\theremark(\arabic*)]
        \item We view $Y$ as a topological space,
        where $V\subseteq Y$ is open iff $V = U \cap Y$ for some $U \in \clopup(X)$. If $X$ is the Priestley space of a frame $L$, then the spatial part $Y$ of $X$ is exactly
    the space of points of $L$ (see, e.g., \cite[Lemma 4.1]{BezhanishviliMelzer2022b}).
        \item If $X$ is an SL-space, then for each $U \in \clopup(X)$ we have $\cl(U \cap Y) = U$. Therefore, the assignment $U \mapsto U \cap Y$ is an isomorphism from the poset of clopen upsets of $X$ to the poset of open sets of $Y$. This will be utilized in what follows.  \label[remark]{rem:iso opens Y}
    \end{enumerate}
\end{remark}

\begin{theorem}[{\cite[Section 4]{BezhanishviliMelzer2022b}}] \label{thm: SL dualities}
\SL is equivalent to \Sob and dually equivalent to \SFrm.
\end{theorem}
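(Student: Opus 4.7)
The plan is to obtain both equivalences as restrictions of Pultr--Sichler duality, combined with the classical dual equivalence between \Sob and \SFrm implemented by $\Omega \dashv pt$.

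The first step is to show that the functors $\functor X$ and $\functor D$ restrict to a dual equivalence between \SFrm and \SL. The heart of the matter is the characterization of the spatial part: for a frame $L$, a prime filter $y \in X_L$ is completely prime iff $\downset y$ is clopen in $X_L$, so the spatial part of $X_L$ coincides with the space of points $pt(L)$ (this identification is recorded in the remark preceding \cref{rem:iso opens Y}). It then follows from the definition of spatiality that $L$ is spatial iff $pt(L)$ separates elements of $L$ iff the spatial part of $X_L$ is dense in $X_L$, which is precisely the defining condition of an SL-space. Because \SFrm is full in \Frm and \SL is full in \LPries by definition, Pultr--Sichler duality restricts without modification.

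Composing this restricted duality with the classical dual equivalence between \Sob and \SFrm then yields the equivalence between \SL and \Sob. On objects, the composite functor sends an SL-space $X$ to its spatial part $Y$: by \cref{rem:iso opens Y} the assignment $U \mapsto U \cap Y$ is a frame isomorphism between $\clopup(X)$ and $\Omega(Y)$, so $Y$ is homeomorphic to $pt(\clopup(X))$ and is therefore sober. An L-morphism $f : X \to X'$ is sent to its restriction $f|_Y : Y \to Y'$, and in the other direction a sober space $Z$ is sent to the Priestley space of the frame $\Omega(Z)$.

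The step I expect to be the main obstacle is verifying the morphism correspondence: that an L-morphism actually restricts to a continuous map between the spatial parts and is uniquely determined by this restriction. For this I would exploit the defining clause $f^{-1}\cl U = \cl f^{-1}U$ of an L-morphism, which forces the pullback of a completely prime filter along $f$ to remain completely prime, ensuring $f(Y) \subseteq Y'$. Continuity of $f|_Y$ is immediate from the description of the topology on spatial parts. To prove fullness of the restriction functor, I would use density of the spatial parts together with the frame isomorphism $\clopup(X) \cong \Omega(Y)$ of \cref{rem:iso opens Y} to match $f|_Y$ with the map on points induced by the dual frame homomorphism $f^{-1} : \clopup(X') \to \clopup(X)$, and conclude that any continuous map $Y \to Y'$ extends uniquely to an L-morphism $X \to X'$.
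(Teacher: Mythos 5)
Your proposal is correct in outline, and its first half is exactly the paper's route (via \cite{BezhanishviliMelzer2022b}): characterize the spatial part of $X_L$ as $pt(L)$, observe that spatiality of $L$ amounts to density of the spatial part, and restrict the Pultr--Sichler functors to the full subcategories \SFrm and \SL. The second half is genuinely different. The paper (quoting \cite[Section 4]{BezhanishviliMelzer2022b}) constructs the spatial-part functor $\functor Y$ directly, proves it is an equivalence $\SL\to\Sob$, and then \emph{derives} the classical duality between \SFrm and \Sob as a corollary (see the remark following the theorem). You instead take the classical duality $\Omega\dashv pt$ between \Sob and \SFrm as an input and compose it with the restricted Priestley duality. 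This is not circular, since the classical duality has proofs independent of Priestley duality, but it inverts the paper's economy: in the paper's program that duality is an output, not a hypothesis. Moreover, the savings are modest, because you still must verify the same technical facts the direct approach needs --- that an L-morphism maps spatial part into spatial part (your argument via complete primeness of the pulled-back filter is the right one), that spatial parts are sober, and that the composite $pt\circ\functor D$ is naturally isomorphic to restriction to the spatial part; together with density giving faithfulness, this is essentially the content of the cited Section 4.

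One step does not stand as written: you argue that the frame isomorphism $\clopup(X)\cong\Omega(Y)$ of \cref{rem:iso opens Y} already yields $Y\cong pt(\clopup(X))$, ``and therefore $Y$ is sober.'' An isomorphism $\Omega(Y)\cong M$ does not determine $Y$ as $pt(M)$ unless $Y$ is already known to be sober (any space and its sobrification have isomorphic frames of opens), so this justification is circular. The fact you need --- that the spatial part of an L-space is sober --- is true and is exactly what the cited source establishes (the paper invokes it later as \cite[Lemma 4.11]{BezhanishviliMelzer2022b}); alternatively, you can obtain it from ingredients already in your first step: apply the identification ``spatial part of $X_L$ equals $pt(L)$'' to the frame $L=\clopup(X)$ and transport along the unit isomorphism $X\cong X_{\clopup(X)}$ of Pultr--Sichler duality, so that $Y$ is homeomorphic to the space of points of a frame and hence sober. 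With that repair, your argument goes through.
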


\begin{remark}
\begin{enumerate}[ref=\theremark(\arabic*)]
    \item[]
    \item The dual equivalence between \SFrm and \SL is obtained by restricting the functors establishing Pultr--Sichler duality. 
    \item The equivalence between \SL and \Sob is obtained as follows. Let $\functor{Y} : \LPries \to \Sob$ be the functor that sends an L-space $X$ to to its spatial part $Y$, and an L-morphism $f : X_1 \to X_2$ to its restriction $g:Y_1\to Y_2$.
    Then $\functor{Y}$  
    restricts to an equivalence between  $\SL$ and $\Sob$ (see, e.g., \cite[Corollary 4.19]{BezhanishviliMelzer2022b}).
    \label[remark]{rem:Y functor}
    \item As an immediate consequence of \cref{thm: SL dualities}, we obtain the well-known duality between \SFrm and \Sob.
    \end{enumerate}
\end{remark}

We now turn our attention to Priestley spaces of continuous frames.

\begin{definition} \label{def:kernel}\label{def:packed}
Let $X$ be an L-space. 
\begin{enumerate}
\item For $U,V\in{\sf ClopUp}(X)$, define $V \ll U$ provided for each open upset $W$ of $X$ we have $U \subseteq \cl W$ implies $V \subseteq W$.
\item For $U \in {\sf ClopUp}(X)$, define the \emph{kernel of $U$} as $$\ker U = \bigcup\{V \in {\sf ClopUp}(X) \mid V \ll U\}.$$ 
\item We call $X$ a \emph{continuous L-space} if $\ker U$ is dense in $U$ for each $U \in \clopup(X)$.
\item An L-morphism $f : X_1 \to X_2$ is \emph{proper} if $f^{-1}(\ker U) \subseteq \ker f^{-1}(U)$ for all $U \in \clopup (X_2)$.
\item Let \CL be the category of continuous L-spaces and proper L-morphisms.
\end{enumerate}
\end{definition}
\begin{theorem}[{\cite[Section 5]{BezhanishviliMelzer2022b}}] 
    \CL is equivalent to \LCSob and dually equivalent to \CFrm.
    \label{hl-frames} \label{hl-spaces}
\end{theorem}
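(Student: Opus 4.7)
The plan is to exploit \cref{thm: SL dualities} (namely that $\SL$ is equivalent to $\Sob$ and dually equivalent to $\SFrm$), together with Hofmann--Lawson duality between $\CFrm$ and $\LCSob$ (\cref{cfrm-dualities-1}). Since every continuous frame is spatial, $\CFrm$ is a full subcategory of $\SFrm$, and the real task reduces to identifying $\CL$ as the corresponding full subcategory of $\SL$. Specifically, I would show that a spatial frame $L$ is continuous iff its Priestley space $X_L$ is a continuous L-space, and that a frame homomorphism is proper iff its dual L-morphism is proper.

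The technical heart is matching the abstract way-below relation on $L$ with the relation $\ll$ on $\clopup(X_L)$ defined in \cref{def:kernel}(1). Under the isomorphism $\varphi : L \to \clopup(X_L)$, joins in $\clopup(X_L)$ are computed as $\cl(\bigcup \cdot)$ rather than as set-theoretic unions, and every open upset of the Esakia space $X_L$ is a union of clopen upsets. Using these two observations, one verifies in both directions that $a \ll b$ in $L$ iff $\varphi(a) \ll \varphi(b)$ in $\clopup(X_L)$: the forward direction exploits compactness of clopen upsets to extract a finite subcover, and the reverse direction rewrites any cover $U \subseteq \cl W$ in terms of clopen upsets sitting inside $W$. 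A direct consequence is the key identity $V \subseteq \ker U \iff V \ll U$ for clopen upsets $V, U$ (since $V$ is compact and finite unions of clopens are clopen). Hence $L$ is continuous --- i.e.\ $b = \bigvee\{a : a \ll b\}$ for every $b \in L$ --- iff $U = \cl(\ker U)$ for every $U \in \clopup(X_L)$, which is exactly the defining condition of a continuous L-space.

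For morphisms, a frame homomorphism $h : L \to M$ is proper iff the induced $f^{-1} : \clopup(X_L) \to \clopup(X_M)$ preserves $\ll$, where $f = h^{-1} : X_M \to X_L$. Combining with the ker characterization above, this is equivalent to $f^{-1}(\ker U) \subseteq \ker f^{-1}(U)$ for every $U \in \clopup(X_L)$ --- precisely the defining condition of a proper L-morphism in \cref{def:kernel}(4). This yields the dual equivalence between $\CL$ and $\CFrm$. The equivalence $\CL \simeq \LCSob$ then follows by composing with Hofmann--Lawson, or alternatively by restricting the functor $\functor Y$ of \cref{rem:Y functor} and checking that local compactness of the spatial part $Y$ corresponds to $\ker U$ being dense in $U$ via the isomorphism $U \mapsto U \cap Y$ of \cref{rem:iso opens Y}. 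The \emph{main obstacle} is the careful matching of the two formulations of $\ll$, since joins in $\clopup(X_L)$ involve topological closure rather than set-theoretic union, and extremal order-disconnectedness of L-spaces is essential to ensure the frame structure on $\clopup(X_L)$ behaves as expected. Once this match is secured, the remaining verifications amount to unwinding definitions.
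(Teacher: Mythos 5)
Your proposal is essentially sound, but note that the paper does not prove this theorem at all: it is imported verbatim from \cite[Section 5]{BezhanishviliMelzer2022b}, so what you have written is a reconstruction of that cited development rather than an alternative to anything in the present text. For the dual equivalence $\CFrm\simeq\CL^{op}$ your route coincides with the cited one: the matching of the way-below relation of $L$ with the relation $\ll$ on $\clopup(X_L)$ (using that $\varphi(\bigvee S)=\cl\bigcup\varphi(s)$ and that every open upset of a Priestley space is a union of clopen upsets), the consequent identity $V\subseteq\ker U\iff V\ll U$ for clopen upsets, and the translation of properness of $h$ into $f^{-1}(\ker U)\subseteq\ker f^{-1}(U)$ are exactly the key lemmas there; your checks of these are correct (only your attribution of which direction of the $\ll$-matching uses compactness of clopen upsets seems swapped, a cosmetic point). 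The genuine divergence is the leg $\CL\simeq\LCSob$: your primary route composes with Hofmann--Lawson duality (\cref{cfrm-dualities-1}), whereas the source proves this equivalence directly by restricting the spatial-part functor $\functor Y$ of \cref{rem:Y functor} and then obtains Hofmann--Lawson as a corollary --- which is precisely why the paper states Hofmann--Lawson as a corollary immediately after this theorem. Invoking \cref{cfrm-dualities-1} is not formally circular here, since the paper quotes it as known, but it empties that corollary and runs against the stated purpose of deriving such dualities from Priestley duality; so your alternative route via $\functor Y$ is the one that matches the source, and as sketched it still needs two points made explicit: that every continuous L-space is an SL-space (via spatiality of continuous frames) so that $\functor Y$ applies, and the morphism-level verification that proper L-morphisms restrict exactly to proper maps of the spatial parts, which in the cited work goes through the correspondence between Scott upsets of $X$ and compact saturated subsets of $Y$ rather than being a formal consequence of the object-level matching.
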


\begin{corollary} [Hofmann--Lawson duality]
    \CFrm is dually equivalent to \LCSob.
\end{corollary}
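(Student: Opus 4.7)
The plan is to derive this corollary as an immediate formal consequence of the preceding \cref{hl-spaces}. That theorem already provides two things about the category \CL of continuous L-spaces: an equivalence $\CL \simeq \LCSob$ and a dual equivalence $\CL \simeq \CFrm^{\mathrm{op}}$. Composing the first equivalence with the second yields a dual equivalence $\LCSob \simeq \CFrm^{\mathrm{op}}$, which is exactly the statement to be proved.

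In more detail, I would unpack the composition as follows. Let $\functor{Y}_c : \CL \to \LCSob$ be the equivalence obtained from the functor $\functor Y$ of \cref{rem:Y functor} by restriction (it sends a continuous L-space $X$ to its spatial part $Y$, and a proper L-morphism to its restriction). Let $\functor{D}_c : \CL \to \CFrm^{\mathrm{op}}$ and $\functor{X}_c : \CFrm^{\mathrm{op}} \to \CL$ be the quasi-inverse functors giving the dual equivalence of \cref{hl-frames}, obtained by restricting the Pultr--Sichler functors $\functor D$ and $\functor X$. Then the composites $\functor{Y}_c \circ \functor{X}_c : \CFrm^{\mathrm{op}} \to \LCSob$ and $\functor{D}_c \circ \functor{Y}_c^{-1} : \LCSob \to \CFrm^{\mathrm{op}}$ witness the desired dual equivalence, as a composition of an equivalence with a dual equivalence is again a dual equivalence.

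Since the content of the corollary is already packaged inside \cref{hl-spaces}, there is no genuine obstacle: the only step is the formal observation that the two legs of that theorem compose to give the Hofmann--Lawson duality. The entire proof therefore amounts to a single sentence invoking \cref{hl-frames,hl-spaces}.
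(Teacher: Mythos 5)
Your proposal is correct and matches the paper's reasoning: the corollary is stated as an immediate consequence of the preceding theorem, obtained exactly by composing the equivalence $\CL \simeq \LCSob$ with the dual equivalence between \CL and \CFrm. The extra unpacking of the functors is fine but not needed beyond the one-sentence invocation.
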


We thus arrive at the following diagram which commutes up to natural isomorphism, where $\leftrightarrow$ represents equivalence.

\begin{figure}[H]
\begin{center}
    \begin{tikzcd}[ampersand replacement=\&, column sep=0em]
    \& \CFrm \ardual{dl} \ardual{dr}\\
    \CL \areqv{rr} \& \& \LCSob
\end{tikzcd}
\end{center}
\end{figure}

We next describe Priestley spaces of stably continuous and stably compact frames. For the next definition see \cite[Section 6]{BezhanishviliMelzer2022b}. The notion of L-compact first appeared in \cite[Section 3]{PultrSichler2000}.

\begin{definition} 
    \begin{enumerate}[ref=\thedefinition(\arabic*)]
    \item[]
    \item
    \begin{enumerate}[ref=\thedefinition(\arabic{enumi})(\alph*)]
        \item 
        An L-space $X$ is \emph{kernel-stable} if $\ker(U \cap V) = \ker U \cap \ker V$ for all $U,V \in \clopup(X)$, \label[definition]{def: kernel-stable}
        \item A \emph{stably continuous L-space} is a kernel-stable continuous L-space.
        \item Let \SCL be the full subcategory of \CL consisting of stably continuous L-spaces.
        \end{enumerate}
        \item
        \begin{enumerate}
        \item 
        An L-space $X$ is \emph{L-compact} if $X = \ker X$.
        \item A \emph{stably compact} L-space is an L-compact stably continuous L-space.
        \item Let \SKL be the full subcategory of \SCL consisting of stably compact L-spaces.
        \end{enumerate}
    \end{enumerate}
\end{definition}

\begin{theorem}[{\cite[Section 6]{BezhanishviliMelzer2022b}}] \label{thm: stably dualities}
    \begin{enumerate}[ref=\thetheorem(\arabic*)]
        \item[]
        \item \SCL is equivalent to \SLCSp and dually equivalent to \SCFrm. 
        \item \SKL is equivalent to \SKSp and dually equivalent to \SKFrm.
    \end{enumerate}
\end{theorem}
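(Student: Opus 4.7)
The plan is to bootstrap from \cref{hl-frames}, which already supplies the equivalence between \CL and \LCSob and the dual equivalence between \CL and \CFrm. Since \SCL, \SKL, \SCFrm, \SKFrm, \SLCSp, and \SKSp are defined as full subcategories of \CL, \CFrm, and \LCSob (with the same morphism classes), the existing (dual) equivalences restrict automatically once the defining object conditions are matched. The theorem thus reduces to establishing, for each L-space $X$ with spatial part $Y$: (A) $X$ is kernel-stable iff $\clopup(X)$ is stably continuous iff $Y$ is coherent; and (B) $X$ is L-compact iff $\clopup(X)$ is compact iff $Y$ is compact.

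For (A), the key ingredient is that the relation $\ll$ on $\clopup(X)$ introduced in \cref{def:packed} coincides with the frame-theoretic way-below relation of $\clopup(X)$. Hence $\ker U$ realises $\bigvee\{V \in \clopup(X) \mid V \ll U\}$ at the subset level. A direct argument then shows kernel-stability is equivalent to stability of $\ll$ on $\clopup(X)$: the nontrivial direction uses that finite joins preserve $\ll$ on the left, together with the compactness (in the Stone topology on $X$) of each clopen upset $W \subseteq \ker U \cap \ker V$, to extract a finite subcover by clopen upsets way below $U \cap V$ whose frame join witnesses $W \ll U \cap V$. For the third equivalence, \cref{rem:iso opens Y} gives a frame isomorphism $\clopup(X) \cong \mathcal{O}(Y)$, and for a locally compact sober space $Y$ stable continuity of $\mathcal{O}(Y)$ is equivalent to coherence of $Y$, since $\ll$ on $\mathcal{O}(Y)$ is governed by compact saturated containment and stability of $\ll$ translates to closure of compact saturated subsets under binary intersection. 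Part (B) is more direct: $X = \ker X$ expresses that the top of $\clopup(X)$ is way below itself, which is frame compactness, and this transports across $\clopup(X) \cong \mathcal{O}(Y)$ to compactness of $Y$.

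The step that will require the most care is translating stable continuity of $\clopup(X)$ into coherence on the spatial side, since one must move fluently between the frame-theoretic $\ll$ and the topological data of compact saturated subsets of $Y$, and verify that the clopen-upset kernels of $X$ match the kernel construction one performs in $\mathcal{O}(Y)$. Once matchings (A) and (B) are in place, both parts (1) and (2) follow by restricting the (dual) equivalences of \cref{hl-frames} to the full subcategories cut out by these conditions.
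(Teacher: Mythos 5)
Your proposal is correct and follows essentially the same route as the cited source (and as the paper's own parallel treatment of the algebraic case in \cref{thm:arithmetic,thm:CohL-equivalences}): one restricts the \CL--\LCSob--\CFrm (dual) equivalences of \cref{hl-frames} to the full subcategories cut out by matching the object conditions, namely kernel-stability of $X$ with stability of $\ll$ on $\clopup(X)$ and with coherence of the spatial part $Y$, and L-compactness of $X$ with compactness of $\clopup(X)\cong\mathcal O(Y)$. The ingredients you flag as delicate (the coincidence of the order-topological $\ll$ of \cref{def:packed} with the frame-theoretic way-below relation, and the compactness/finite-subcover argument for stability) are exactly the lemmas used in \cite[Section 6]{BezhanishviliMelzer2022b}, so no genuinely different machinery is introduced.
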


As a consequence, we obtain the following well-known dualities for stably continuous frames:

\begin{corollary}[{\cite[Corollary VI-7.2]{Compendium2003}}]
    \begin{enumerate}[ref=\thecorollary(\arabic*)]
    \item[]
    \item \SCFrm is dually equivalent to \SLCSp.
    \item \SKFrm is dually equivalent to \SKSp. \label[corollary]{cor:tpl}
    \end{enumerate}
\end{corollary}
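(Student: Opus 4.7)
The plan is to deduce the corollary directly from the preceding theorem by composition of (dual) equivalences, in complete analogy with how Hofmann--Lawson duality was recovered from \cref{hl-frames} just above. Since \cref{thm: stably dualities} already asserts two independent equivalences through the Priestley-style intermediate categories, the corollary is essentially a formal consequence.

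In detail, for part (1), I would invoke \cref{thm: stably dualities}(1), which supplies a dual equivalence $\SCFrm \leftrightsquigarrow \SCL$ together with an equivalence $\SCL \leftrightarrow \SLCSp$. Composing the dual equivalence with the equivalence yields a dual equivalence $\SCFrm \leftrightsquigarrow \SLCSp$. Concretely, one would take the composite functors: the dual functor $\SCFrm \to \SCL$ (restriction of $\functor{X}$ sending a frame $L$ to its Priestley space of prime filters) followed by the spatial-part functor $\functor{Y} : \SCL \to \SLCSp$, and in the opposite direction the functor $\SLCSp \to \SCL$ coming from the equivalence, followed by $\functor{D} : \SCL \to \SCFrm$ sending an L-space to its frame of clopen upsets. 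The natural isomorphisms witnessing the (dual) equivalence of the corollary are then obtained by whiskering the natural isomorphisms from \cref{thm: stably dualities}(1).

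For part (2), the argument is identical, using \cref{thm: stably dualities}(2) in place of (1): compose the dual equivalence $\SKFrm \leftrightsquigarrow \SKL$ with the equivalence $\SKL \leftrightarrow \SKSp$ to obtain $\SKFrm \leftrightsquigarrow \SKSp$.

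There is no genuine obstacle here; the content of both dualities has already been packaged into \cref{thm: stably dualities}, and the corollary is a purely formal consequence of composing the two halves. The only thing to note is the standard fact that the composition of an equivalence with a dual equivalence is again a dual equivalence, which is immediate from the definitions (a dual equivalence being an equivalence with one of the categories replaced by its opposite).
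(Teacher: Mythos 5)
Your proposal is correct and matches the paper's (implicit) argument exactly: the corollary is obtained by composing the dual equivalence with the equivalence packaged in \cref{thm: stably dualities}, just as Hofmann--Lawson duality was recovered from \cref{hl-frames}. Nothing further is needed.
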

\begin{figure}[H]
\begin{center}
    \begin{tikzcd}[ampersand replacement=\&, column sep=0em]
    \& \SCFrm \ardual{dl} \ardual{dr}\\
    \SCL \areqv{rr} \& \& \SLCSp
    \end{tikzcd}\hspace{2em}
    \begin{tikzcd}[ampersand replacement=\&, column sep=0em]
    \& \SKFrm \ardual{dl} \ardual{dr}\\
    \SKL \areqv{rr} \& \& \SKSp
    \end{tikzcd}
\end{center}
\end{figure}

We conclude this section by describing Priestley spaces of compact regular frames. The next definition appeared in \cite[Section 3]{BezhanishviliGabelaiaJibladze2016} and \cite[Section 7]{BezhanishviliMelzer2022b}.

\begin{definition}
Let $X$ be an L-space. 
\begin{enumerate}
\item For $U,V\in{\sf ClopUp}(X)$, define $V \prec U$ provided $\downset V \subseteq U$.
\item For $U \in {\sf ClopUp}(X)$, define the \emph{regular part of $U$} as $$\reg U = \bigcup\{V \in {\sf ClopUp}(X) \mid V \prec U\}.$$ 
\item We call $X$ a \emph{regular L-space} if $\reg U$ is dense in $U$ for each $U \in \clopup(X)$.
\item We call $X$ a \emph{compact regular L-space} if $X$ is a regular L-space that is  L-compact. 
\item Let \KRL be the full subcategory of \LPries consisting of compact regular L-spaces.
\end{enumerate}
\end{definition}

\begin{remark}
    Every L-morphism between compact regular L-spaces is proper (see \cite[Theorem 7.18(2)]{BezhanishviliMelzer2022b}), and every compact regular L-space is a stably compact L-space (see \cite[Theorem 7.17]{BezhanishviliMelzer2022b}). Thus, \KRL is a full subcategory of \SKL. 
\end{remark}

We have the following categories of continuous L-spaces.
\begin{table}[H]
\begin{tabular}{lll}
    \toprule 
    \multicolumn{1}{c}{\bf Category} & \multicolumn{1}{c}{\bf Objects} & \multicolumn{1}{c}{\bf Morphisms} \\ \midrule
    \CL & continuous L-spaces & proper L-morphisms \\
    \SCL & stably continuous L-spaces & proper L-morphisms \\
    \SKL & stably compact L-spaces & proper L-morphisms\\
    \KRL & compact regular L-spaces & L-morphisms\\
    \bottomrule
\end{tabular}
\caption{Categories of continuous L-spaces.\label{table:ConLspaces}}
\end{table}

\begin{theorem}[{\cite[Section 7]{BezhanishviliMelzer2022b}}] \label{thm: KRL duality}
\KRL is equivalent to \KHaus and dually equivalent to \KRFrm.
\end{theorem}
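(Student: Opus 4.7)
The plan is to leverage the already-established duality and equivalence for stably compact L-spaces (\cref{thm: stably dualities}), using that \KRL embeds as a full subcategory of \SKL. Both halves of the theorem then reduce to showing that under the relevant functors, compact regular L-spaces correspond precisely to compact regular frames (for the dual equivalence with \KRFrm) and to compact Hausdorff spaces (for the equivalence with \KHaus).

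For the dual equivalence \KRL $\leftrightsquigarrow$ \KRFrm, the key step is to establish a dictionary: under the Pultr--Sichler isomorphism $L \cong \clopup(X_L)$ via $a \mapsto \varphi(a)$, the relation $V \ll U$ on clopen upsets (\cref{def:kernel}) matches the way-below relation in the frame $\clopup(X_L)$, and the relation $V \prec U$ defined by $\downset V \subseteq U$ matches the well-inside relation $\prec$. For the $\ll$ correspondence one rewrites $U \subseteq \cl W$ in terms of the join in $\clopup(X)$; for the $\prec$ correspondence one uses that complements in a Stone space furnish pseudocomplements in $\clopup(X)$, via $V^* = X \setminus \downset V$. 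Given this dictionary, L-compactness $X = \ker X$ becomes $1 \ll 1$ (compactness of the frame), density of $\reg U$ in each $U$ becomes $a = \bigvee\{b \prec a\}$ (regularity), and the reverse direction starting from $L$ is symmetric. No morphism restriction is needed, since every L-morphism between compact regular L-spaces is proper (by the remark preceding the theorem) and every frame homomorphism between compact regular frames is proper by definition of \KRFrm.

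For the equivalence \KRL $\simeq$ \KHaus, I would restrict the spatial-part functor $\functor Y : \SKL \to \SKSp$. Given $X \in \KRL$, the space $\functor Y(X)$ is (homeomorphic to) the space of points of $\clopup(X)$; by the previous step $\clopup(X)$ is compact regular, and the space of points of any compact regular frame is compact Hausdorff (the object part of Isbell duality, \cref{cfrm-dualities-4}). Conversely, for compact Hausdorff $Z$ the frame $\mathcal O(Z)$ is compact regular, so its Priestley space lies in \KRL with spatial part $Z$. Thus $\functor Y$ restricts to an equivalence.

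The main obstacle is establishing the dictionary linking the L-space relations $\ll$ and $\prec$ on $\clopup(X)$ to the corresponding frame-theoretic relations. The definitions in \cref{def:kernel} are tailored to make this dictionary hold, but verifying it cleanly requires some bookkeeping with closures of open upsets and with pseudocomplements in Priestley spaces. Once this is settled, both claims follow by restricting the functors from \cref{thm: stably dualities} together with the standard correspondence between compact regular frames and compact Hausdorff spaces.
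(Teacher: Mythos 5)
Your overall strategy is the same one the paper relies on: the statement is quoted from \cite[Section~7]{BezhanishviliMelzer2022b}, and that proof likewise restricts the \SKL-level results of \cref{thm: stably dualities} to the full subcategories, using the dictionary $a\prec b \iff \downset\varphi(a)\subseteq\varphi(b)$ (with pseudocomplement $V^*=X\setminus\downset V$), the translation $\varphi(\bigvee S)=\cl\bigcup\varphi[S]$ for regularity, and \cref{lem:compact} for compactness; so the \KRFrm half of your proposal is essentially the paper's argument. The genuine divergence is in the \KHaus half: you conclude that the spatial part of a compact regular L-space is compact Hausdorff by appealing to the object part of Isbell duality (\cref{cfrm-dualities-4}). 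That is not mathematically false, since the paper records \cref{cfrm-dualities-4} as a classical fact, but it is circular relative to the development this theorem belongs to: the corollary immediately following it \emph{derives} Isbell duality from it, and the point of the Priestley-space route (both here and in \cite{BezhanishviliMelzer2022b}) is to obtain Isbell duality as output, not to use it as input. The fix is cheap and is what the cited proof effectively does: compactness of the spatial part $Y$ is \cref{lem:compact=tight}, and Hausdorffness follows directly from regularity. Indeed, for distinct $y,z\in Y$, Priestley separation gives $U\in\clopup(X)$ with, say, $y\in U$ and $z\notin U$; since $\reg U$ is an open upset dense in $U$, we get $y\in U\cap Y=\cl(\reg U)\cap Y=\reg U\cap Y$ (as in the proof of \cref{lem:scott-ext}), so there is $V\in\clopup(X)$ with $y\in V$ and $\downset V\subseteq U$; then $V\cap Y$ and $(X\setminus\downset V)\cap Y$ are disjoint open subsets of $Y$ separating $y$ from $z$, because $X\setminus\downset V$ is a clopen upset of the Esakia space $X$. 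Your converse direction (for compact Hausdorff $Z$, the Priestley space of $\mathcal{O}(Z)$ lies in \KRL and has spatial part homeomorphic to $Z$ by sobriety and \cref{rem:iso opens Y}) is fine and needs no appeal to Isbell duality, as is your observation that no morphism conditions need checking on either side.
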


\begin{corollary} [Isbell duality]
\KRFrm is dually equivalent to \KHaus.
\end{corollary}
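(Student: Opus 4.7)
The plan is to deduce this corollary immediately from \cref{thm: KRL duality}, which bundles together two statements: an equivalence $\KRL \simeq \KHaus$ and a dual equivalence $\KRL \leftrightsquigarrow \KRFrm$. Composing a dual equivalence with an equivalence yields a dual equivalence, so chaining
\[
\KRFrm \;\leftrightsquigarrow\; \KRL \;\leftrightarrow\; \KHaus
\]
produces the desired dual equivalence between \KRFrm and \KHaus.

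More concretely, I would invoke the functors already in play in the paper: the restriction of the Pultr--Sichler functors $\functor X \colon \KRFrm \to \KRL$ and $\functor D \colon \KRL \to \KRFrm$ (given by $\functor D(X) = \clopup(X)$) giving the dual equivalence, together with the spatial-part functor $\functor Y \colon \KRL \to \KHaus$ sending an L-space to its spatial part, already known to give the equivalence $\KRL \simeq \KHaus$ by \cref{thm: KRL duality}. Then the composite $\functor Y \circ \functor X \colon \KRFrm \to \KHaus$ and its quasi-inverse, obtained by composing a quasi-inverse of $\functor Y$ with $\functor D$, witness the dual equivalence. No new verification of naturality or unit/counit data is needed beyond what is already packaged in \cref{thm: KRL duality}.

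There is essentially no obstacle: the entire content of Isbell duality has been absorbed into the preceding theorem, and the corollary is a formal consequence. The only care required is to observe that composition of an equivalence and a dual equivalence is again a dual equivalence, which is standard.
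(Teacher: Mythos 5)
Your proposal is correct and matches the paper's (implicit) argument exactly: the corollary is stated as an immediate consequence of \cref{thm: KRL duality}, obtained by composing the dual equivalence $\KRFrm \leftrightsquigarrow \KRL$ with the equivalence $\KRL \leftrightarrow \KHaus$.
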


\begin{figure}[H]
\begin{center}
    \begin{tikzcd}[ampersand replacement=\&, column sep=0em]
    \& \KRFrm \ardual{dl} \ardual{dr}\\
    \KRL \areqv{rr} \& \& \KHaus
\end{tikzcd}
\end{center}
\end{figure}

We thus have the diagram in \cref{diagram: equivalences}.

\begin{figure}[H]
\begin{center}
\begin{tikzcd}[ampersand replacement=\&, column sep={6em,between origins}, row sep=2em]
    \CFrm \ardual{d} \arrestrictright 
        \& \SCFrm \ardual{d} \arrestrictright 
        \& \SKFrm \ardual{d} \arrestrictright 
        \& \KRFrm \ardual{d}\\
    \CL \arrestrictright  \areqv{d}
        \& \SCL \arrestrictright \areqv{d}
        \& \SKL \arrestrictright \areqv{d}
        \& \KRL \areqv{d}\\
    \LCSob \arrestrictright 
        \& \SLCSp \arrestrictright 
        \& \SKSp \arrestrictright 
        \& \KHaus
\end{tikzcd}
\end{center}
\caption{Equivalences and dual equivalences between categories of continuous frames, continuous L-spaces, and locally compact sober spaces.}\label{diagram: equivalences}
\end{figure}

In what follows, we will obtain a similar picture of equivalences and dual equivalences when the above categories of continuous frames are replaced by the corresponding full subcategories of algebraic frames.

\section{Priestley duality for algebraic frames} \label{sec: alg frm}

In this section we describe algebraic frames in the language of Priestley spaces. We then connect the Priestley duals of algebraic frames with compactly based sober spaces to derive the well-known duality between \AlgFrm and \KBSob mentioned in \cref{frm-dualities-alg}.

Let $X$ be an L-space and $Y$ the spatial part of $X$. We recall (see \cite[Definition 5.2]{BezhanishviliMelzer2022}) that a closed upset $F$ of $X$ is a \emph{Scott upset} if $\min F \subseteq Y$, where $\min F$ is the set of minimal points of $F$. Equivalently, $F$ is a Scott upset of $X$ iff 
\begin{equation*}
F \subseteq \cl U  \implies F \subseteq U \hspace{2em}\mbox{ for each open upset $U$ of $X$} \tag{$\dagger$} \label{dagger}
\end{equation*}
(see \cite[Lemma 5.1]{BezhanishviliMelzer2022}). We denote the collection of all clopen Scott upsets of $X$ by $\clopsup(X)$.

\begin{definition} \label{def: bunched}
    Let $X$ be an L-space. 
    \begin{enumerate}
    \item For $U \in \clopup(X)$, define the \emph{core} of $U$ as
    \[
        \core U = \bigcup \{V \subseteq U \mid V \in \clopsup(X)\}.
    \]
    \item Call $X$ an \emph{algebraic} L-space if $\core U$ is dense in $U$ for each $U\in \clopup(X)$.
    \end{enumerate}
\end{definition}

\begin{lemma} \plabel{lem:core}
    Let $X$ be an $L$-space and $U,V \in \clopup(X)$. \begin{enumerate}[ref=\thelemma(\arabic*)]
    \item $\core U \subseteq \ker U \subseteq U$. \clabel{1}
    \item $U \subseteq V$ implies $\core U \subseteq \core V$. \clabel{2}
    \item If $X$ is an algebraic L-space, then $X$ is a continuous L-space. \clabel{3}
    \item $U$ is a Scott upset iff $\core U = U$. \clabel{4}
    \end{enumerate}
\end{lemma}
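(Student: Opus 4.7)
The plan is to verify each clause in order; they build on one another in a natural way, with clause~(4) requiring the only slightly substantive argument.

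For clause~(1), the inclusion $\ker U \subseteq U$ reduces to showing $V \ll U \Rightarrow V \subseteq U$ for $V \in \clopup(X)$. Since $U$ is itself an open upset and $U = \cl U$ (as $U$ is clopen), taking $W := U$ in the definition of $\ll$ immediately gives $V \subseteq U$. For $\core U \subseteq \ker U$, I would show that every clopen Scott upset $V \subseteq U$ satisfies $V \ll U$: if $W$ is an open upset with $U \subseteq \cl W$, then $V \subseteq U \subseteq \cl W$, and so the characterization~\eqref{dagger} of Scott upsets applied to $V$ yields $V \subseteq W$.

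Clause~(2) is immediate from the definition of $\core$: any clopen Scott upset contained in $U$ is also contained in $V$, so it contributes to the union defining $\core V$. Clause~(3) follows directly by combining the algebraic hypothesis ``$\core U$ is dense in $U$'' with the inclusion $\core U \subseteq \ker U \subseteq U$ from~(1); density then transfers upward to $\ker U$.

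The main content is in clause~(4). The forward direction is trivial: if $U \in \clopsup(X)$, then $U$ itself appears in the union defining $\core U$, hence $U \subseteq \core U$, and the reverse inclusion comes from~(1). For the converse, assume $\core U = U$. Since $U$ is already a clopen upset, by~\eqref{dagger} it suffices to show that for every open upset $W$, $U \subseteq \cl W$ implies $U \subseteq W$. Given such a $W$, every clopen Scott upset $V \subseteq U$ satisfies $V \subseteq U \subseteq \cl W$, and applying~\eqref{dagger} to $V$ gives $V \subseteq W$. Taking the union over all such $V$ yields $U = \core U \subseteq W$, as required. The only potential pitfall here is being careful that~\eqref{dagger} is the correct characterization of Scott upsets to invoke in both directions of this argument, but this is precisely the content cited from \cite[Lemma 5.1]{BezhanishviliMelzer2022}.
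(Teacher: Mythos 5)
Your proof is correct, and clauses (1)--(3) and the forward half of (4) match the paper's argument; the interesting divergence is in the converse direction of (4). There you verify the characterization \eqref{dagger} directly: assuming $\core U = U$, you take any open upset $W$ with $U \subseteq \cl W$, apply \eqref{dagger} to each clopen Scott upset $V \subseteq U$ to get $V \subseteq W$, and conclude $U = \core U \subseteq W$, so that $U$ (being a closed upset) is a Scott upset by the ``if'' direction of \eqref{dagger}. The paper instead argues via compactness: since $U$ is clopen in a compact space, $U = \core U$ yields a finite cover of $U$ by clopen Scott upsets, and a finite union of Scott upsets is a Scott upset. Your route avoids compactness and the finite-union lemma entirely, at the cost of invoking both directions of the equivalence \eqref{dagger} (as you note); the paper's route keeps \eqref{dagger} in the background but leans on compactness, a pattern it reuses later (e.g.\ in \cref{lem:scott-ext}). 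A minor additional difference: you prove $\ker U \subseteq U$ in (1) directly by taking $W := U$ in the definition of $\ll$, whereas the paper cites \cite[Lemma 5.2(1)]{BezhanishviliMelzer2022b}; your self-contained argument is equally valid.
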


\begin{proof}
    (1) Suppose $x \in \core(U)$. Then there is $V\in\clopsup(X)$ such that $x \in V \subseteq U$. Let $W$ be an open upset such that $U \subseteq \cl W$. Then $V \subseteq \cl W$, so $V \subseteq W$ by (\ref{dagger}). Hence, $V \ll U$.  Therefore, $x \in \ker U$, and so $\core U \subseteq \ker U$. That $\ker U \subseteq U$ follows from \cite[Lemma 5.2(1)]{BezhanishviliMelzer2022b}. 

    (2) This is obvious from the definition of the core.
        
    (3) Let $U \in \clopup(X)$. Since $X$ is an algebraic L-space, $\core U$ is dense in $U$. Therefore, $\ker U$ is dense in $U$ by (1). Thus, $X$ is a continuous L-space.
    
    (4) First suppose that $U$ is a Scott upset. By (1), $\core U \subseteq U$. Since $U$ is a Scott upset, $U \subseteq \core U$. Thus, $\core U = U$. Conversely, suppose that $U = \core U$. Since $U$ is compact, there are clopen Scott upsets $V_1, \dots, V_n \subseteq U$ such that $U = V_1 \cup \dots \cup V_n$. Because a finite union of Scott upsets is a Scott upset, $U$ is a Scott upset.
\end{proof}

We next connect algebraic frames with algebraic L-spaces. Let $L$ be a frame, $X$ its Priestley space, and $a \in L$. To simplify notation, we write $\core(a)$ for $\core\varphi(a)$ and $\ker(a)$ for $\ker \varphi(a)$. 

\begin{lemma}[{\cite[Lemma 6.10]{BezhanishviliMelzer2022b}}]  \label{lem:compact}
    Let $L$ be a frame and $X_L$ its Priestley space. For $a \in L$, the following are equivalent.
\begin{enumerate}
    \item $a$ is compact.
    \item $\ker(a) = \varphi(a)$.
    \item $\varphi(a)$ is a Scott upset.
    \end{enumerate}
    In particular, $L$ is compact iff 
    $X_L$ is L-compact.
\end{lemma}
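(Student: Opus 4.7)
The plan is to prove the chain $(1)\Rightarrow(2)\Rightarrow(3)\Rightarrow(1)$ using the standard Priestley-theoretic dictionary, and then derive the final claim as the special case $a=1$. The main translation device I rely on is that every open upset $W$ of $X_L$ has the form $W=\bigcup\{\varphi(b)\mid b\in S\}$ for some $S\subseteq L$, and that $\cl W = \varphi(\bigvee S)$; for finite $S$, the closure is superfluous since $\varphi$ preserves finite joins. I also use that $\varphi$ is an order-embedding of $L$ into $\clopup(X_L)$.

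For $(1)\Rightarrow(2)$: by \cref{lem:core-1} it suffices to prove $\varphi(a)\subseteq \ker(a)$, and for this I will show $\varphi(a)\ll\varphi(a)$ in the L-space sense. Given an open upset $W=\bigcup\{\varphi(b)\mid b\in S\}$ with $\varphi(a)\subseteq \cl W=\varphi(\bigvee S)$, order-embedding gives $a\leq\bigvee S$; compactness of $a$ yields a finite $T\subseteq S$ with $a\leq\bigvee T$, whence $\varphi(a)\subseteq\bigcup_{b\in T}\varphi(b)\subseteq W$.

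For $(2)\Rightarrow(3)$: assume $\ker(a)=\varphi(a)$. By topological compactness of $\varphi(a)$, there exist finitely many $V_1,\dots,V_n\in\clopup(X_L)$ with each $V_i\ll\varphi(a)$ and $\varphi(a)=V_1\cup\dots\cup V_n$. Given an open upset $W$ with $\varphi(a)\subseteq\cl W$, each $V_i\subseteq W$, so $\varphi(a)\subseteq W$; hence $\varphi(a)$ satisfies the Scott-upset condition \eqref{dagger}. For $(3)\Rightarrow(1)$: assuming $\varphi(a)$ is a Scott upset, let $a\leq\bigvee S$. Then $\varphi(a)\subseteq\varphi(\bigvee S)=\cl\bigcup_{b\in S}\varphi(b)$, and applying \eqref{dagger} to $W=\bigcup_{b\in S}\varphi(b)$ gives $\varphi(a)\subseteq W$. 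By compactness of $\varphi(a)$, finitely many $\varphi(b_1),\dots,\varphi(b_k)$ cover it, so $\varphi(a)\subseteq\varphi(b_1\vee\cdots\vee b_k)$, and the order-embedding property of $\varphi$ gives $a\leq b_1\vee\cdots\vee b_k$, proving $a\ll a$.

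The ``in particular'' clause then follows by applying the equivalence $(1)\Leftrightarrow(2)$ to $a=1$: since $\varphi(1)=X_L$, compactness of $1$ is equivalent to $\ker X_L = X_L$, which is precisely the definition of $X_L$ being L-compact. The main subtlety I expect to handle carefully is the identification $\cl\bigcup_{b\in S}\varphi(b)=\varphi(\bigvee S)$ together with its finitary counterpart, since the Scott-upset condition and the way-below relation on the L-space both refer to arbitrary open upsets rather than only clopen ones; once this dictionary is in place, each implication is essentially a compactness argument for the clopen set $\varphi(a)$.
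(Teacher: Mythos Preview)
The paper does not prove this lemma; it is quoted from \cite[Lemma~6.10]{BezhanishviliMelzer2022b} without argument, so there is no in-paper proof to compare against. Your proof is correct and follows the expected Priestley-theoretic dictionary: open upsets of $X_L$ are exactly the unions $\bigcup_{b\in S}\varphi(b)$, their closures are $\varphi\bigl(\bigvee S\bigr)$, $\varphi$ is an order-embedding, and each $\varphi(a)$ is compact as a clopen subset of a compact space. These are precisely the ingredients the paper itself invokes elsewhere (see the proof of \cref{thm:alg-hb}), and the cycle $(1)\Rightarrow(2)\Rightarrow(3)\Rightarrow(1)$ you outline is the natural one; the only cosmetic remark is that in $(2)\Rightarrow(3)$ you could skip the finite-cover step and simply observe that $\ker\varphi(a)=\varphi(a)$ already says $\varphi(a)\ll\varphi(a)$, which is exactly condition~\eqref{dagger} for the closed upset $\varphi(a)$.
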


\begin{theorem} \plabel{thm:alg-hb}
    Let $L$ be a frame and $X_L$ its Priestley space. 
    \begin{enumerate}[ref=\thetheorem(\arabic*)]
        \item For $a\in L$, we have $a = \bigvee\{b \in K(L) \mid b \leq a\}$ iff $\core(a)$ is dense in $\varphi(a)$.
        \item $L$ is an algebraic frame iff $X_L$ is an algebraic L-space. \clabel{2}
    \end{enumerate}
\end{theorem}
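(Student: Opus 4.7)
The plan is to prove part (1) directly and then obtain part (2) as an immediate corollary by quantifying over all $a \in L$. The main idea is to translate the frame-theoretic statement $a = \bigvee\{b \in K(L) \mid b \leq a\}$ into a topological statement about $X_L$ via the Stone map $\varphi$, using that $\varphi$ gives a bijection between $L$ and $\clopup(X_L)$.

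The first observation I would record is that every clopen upset of $X_L$ is of the form $\varphi(b)$ for a unique $b \in L$ (by Pultr--Sichler duality), and by \cref{lem:compact} such a $\varphi(b)$ is a Scott upset iff $b \in K(L)$. Combined with the order-embedding property $\varphi(b) \subseteq \varphi(a) \Leftrightarrow b \leq a$, this gives
\[
\core(a) \;=\; \bigcup\{\varphi(b) \mid b \in K(L),\ b \leq a\}.
\]
The second key ingredient is the standard fact that in the L-space $X_L$, joins in the frame $\clopup(X_L)$ are computed as closures of unions (this uses extremal order-disconnectedness: the closure of an open upset is a clopen upset). Hence for any family $S \subseteq L$ with $b \leq a$ for all $b \in S$, we have $\varphi(\bigvee S) = \cl\bigl(\bigcup_{b \in S} \varphi(b)\bigr)$.

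Applying this to $S = \{b \in K(L) \mid b \leq a\}$ yields
\[
\textstyle\bigvee S = a \iff \cl(\core(a)) = \varphi(a) \iff \core(a) \text{ is dense in } \varphi(a),
\]
where the last equivalence uses that $\core(a) \subseteq \varphi(a)$ and $\varphi(a)$ is closed. This proves (1). For (2), $L$ is algebraic iff the equality in (1) holds for every $a \in L$; since every clopen upset of $X_L$ has the form $\varphi(a)$, this happens iff $\core U$ is dense in $U$ for every $U \in \clopup(X_L)$, i.e., iff $X_L$ is an algebraic L-space.

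I do not anticipate a serious obstacle here: the work has essentially been done by \cref{lem:compact} (identifying clopen Scott upsets with compact elements) and by Pultr--Sichler duality (identifying arbitrary joins with closures of unions in $\clopup(X_L)$). The only point requiring care is keeping track of the density formulation versus the equality of closures, which is purely a matter of noting that $\core(a) \subseteq \varphi(a)$ and that $\varphi(a)$ is closed.
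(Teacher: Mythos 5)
Your proposal is correct and matches the paper's argument: the paper likewise combines the fact that $\varphi(\bigvee S)=\cl\bigl(\bigcup\{\varphi(s)\mid s\in S\}\bigr)$ with \cref{lem:compact} to identify $\cl(\core(a))$ with $\varphi\bigl(\bigvee\{b\in K(L)\mid b\leq a\}\bigr)$, and then deduces (2) from (1) by quantifying over all $a\in L$.
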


\begin{proof}
    (1) It is well known (see, e.g., \cite[Lemma 2.3]{BezhanishviliBezhanishvili2008}) that \[
        \varphi\left(\bigvee S\right)=\cl\left(\bigcup\{\varphi(s) \mid s \in S\}\right)
    \]
    for each $S \subseteq L$. Therefore, by \cref{lem:compact} we have $a = \bigvee\{b \in K(L) \mid b \leq a\}$ iff 
    \[
        \varphi(a) = \cl \left( \bigcup \{\varphi(b)\in \clopsup(X_L) \mid\varphi(b) \subseteq \varphi(a) \}\right) = \cl (\core(a)).
    \] 
    
    (2) follows from (1).
\end{proof}

We now turn to morphisms between algebraic L-spaces.

\begin{definition} \label{def: L-coherent}
\begin{enumerate}
    \item[]
    \item We call an L-morphism $f : X_1 \to X_2$ between L-spaces \emph{coherent} if 
    \[
    f^{-1}(\core U) \subseteq \core f^{-1}(U) 
    \hspace{2em}\mbox{ for all }U\in{\sf ClopUp}(X_{2}).
    \]
    \item Let \AlgL be the category of algebraic L-spaces and coherent L-morphisms.
\end{enumerate}
\end{definition}

It is easy to see that the identity morphism is a coherent L-morphism and that the composition of two coherent L-morphisms is coherent. Therefore, \AlgL is indeed a category. We show that \AlgL is a full subcategory of \CL. For this we need the following two lemmas.

\begin{lemma} \plabel{lem:scott-ext}
    Let $X$ be a continuous L-space and $U \in \clopup(X)$. The following are equivalent. \begin{enumerate}[ref=\thelemma(\arabic*)]
    \item
    $\ker U = \core U$. \clabel{1} 
    \item $\core U$ is dense in $U$.
    \item For each $y \in U \cap Y$, there is $V \in \clopsup(X)$ such that $y \in V \subseteq U$. \clabel{needed}
    \item For each Scott upset $F \subseteq \ker U$, there is $V \in \clopsup(X)$ such that $F \subseteq V \subseteq U$. \clabel{3}
    \end{enumerate}
\end{lemma}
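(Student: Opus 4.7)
The plan is to prove $(1) \Leftrightarrow (2)$ and $(3) \Leftrightarrow (4)$ separately, then bridge the two clusters via the fact that in a continuous L-space every $y \in U \cap Y$ already lies in $\ker U$.

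For $(1) \Rightarrow (2)$, combine $\core U \subseteq \ker U$ from \cref{lem:core-1} with the density of $\ker U$ in $U$ coming from continuity of $X$. For the converse $(2) \Rightarrow (1)$, the main point is that $\core U$ is an \emph{open upset}, since each clopen Scott upset in its defining union is in particular a clopen upset. Density therefore gives $U \subseteq \cl(\core U)$, so for any $W \in \clopup(X)$ with $W \ll U$ the definition of $\ll$, applied to the open upset $\core U$, forces $W \subseteq \core U$; taking the union over all such $W$ yields $\ker U \subseteq \core U$, and equality follows.

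For $(3) \Leftrightarrow (4)$, the direction $(3) \Rightarrow (4)$ is a compactness argument. Given a Scott upset $F \subseteq \ker U$, its minima lie in $U \cap Y$; (3) supplies, for each $z \in \min F$, some $V_z \in \clopsup(X)$ with $z \in V_z \subseteq U$, and since $F = \upset \min F$ these sets cover $F$. Compactness of $F$ extracts a finite subcover whose union $V$ is a clopen upset contained in $U$, and the fact that a finite union of Scott upsets is a Scott upset (already used in the proof of \cref{lem:core-4}) shows $V \in \clopsup(X)$. Conversely, $(4) \Rightarrow (3)$ applies (4) to $F := \upset y$ for $y \in U \cap Y$: this $F$ is a Scott upset since $\min F = \{y\} \subseteq Y$, and because $\ker U$ is an upset one has $\upset y \subseteq \ker U$ as soon as $y \in \ker U$.

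To bridge the two clusters, the key observation is that $U \cap Y \subseteq \ker U$ in every continuous L-space; this reflects the fact that every completely prime filter of a continuous frame contains a way-below approximation of each of its elements, and is implicit in the duality $\CL \simeq \LCSob$ of \cref{hl-frames}. Granting it, $(4) \Rightarrow (3)$ is completed as just indicated, and combining it with $(1) \Leftrightarrow (2)$ also gives $(2) \Rightarrow (3)$: if $\ker U = \core U$ then $U \cap Y \subseteq \core U$. Finally, $(3) \Rightarrow (2)$ follows from $U \cap Y \subseteq \core U$ (by (3)) together with the density of $U \cap Y$ in $U$, which holds because continuous L-spaces are SL-spaces (their dual frames being spatial). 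The main obstacle is isolating this bridging fact $U \cap Y \subseteq \ker U$, which is what connects the abstract density conditions in (1)--(2) to the pointwise conditions in (3)--(4).
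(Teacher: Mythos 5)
Your proposal is correct, but it follows a genuinely different route from the paper, which proves the single cycle (1)$\Rightarrow$(2)$\Rightarrow$(3)$\Rightarrow$(4)$\Rightarrow$(1); there the closing step (4)$\Rightarrow$(1) first interpolates a Scott upset $F$ with $V \subseteq F \subseteq U$ whenever $V \ll U$ (\cite[Lemma 5.7]{BezhanishviliMelzer2022b}) and then uses (4) to shrink it to a clopen Scott upset. You instead prove (1)$\Leftrightarrow$(2) and (3)$\Leftrightarrow$(4) and glue the clusters with the fact $U \cap Y \subseteq \ker U$. Your (2)$\Rightarrow$(1) is a pleasant shortcut: since $\core U$ is an open upset and density gives $U \subseteq \cl(\core U)$, the definition of $W \ll U$ yields $W \subseteq \core U$ directly, so $\ker U \subseteq \core U$ without any interpolation lemma. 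Your (3)$\Rightarrow$(4) is essentially the paper's compactness argument, and (4)$\Rightarrow$(3) via $F = \upset y$ is a new but valid implication. Two remarks on the glue: the bridging fact should not be left as ``implicit in the duality''---it has a one-line proof from the very lemma the paper invokes in its (2)$\Rightarrow$(3) step, namely \cite[Lemma 4.14(1)]{BezhanishviliMelzer2022b}: $\ker U$ is an open upset and $U = \cl(\ker U)$ by continuity of $X$, so $U \cap Y = \cl(\ker U) \cap Y = \ker U \cap Y \subseteq \ker U$. Also, your (3)$\Rightarrow$(2) needs the extra input that continuous L-spaces are SL-spaces (so that $U \cap Y$ is dense in $U$); this is true because continuous frames are spatial, but the paper's cycle never needs it. On balance, your decomposition trades the external interpolation lemma used in the paper for these (easy) facts about the spatial part, at the cost of a couple more implications; both arguments are sound.
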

\begin{proof}
    (1)$\Rightarrow$(2) Since $X$ is a continuous L-space, $\ker U$ is dense in $U$. Therefore, $\ker U = \core U$ implies that $\core U$ is dense in $U$.
    
    (2)$\Rightarrow$(3) Suppose $y \in U \cap Y$.  Since $U = \cl(\core U)$, we have $y \in \cl (\core U) \cap Y$. Because $\core U$ is an open upset, $\cl (\core U) \cap Y = \core U \cap Y$ by \cite[Lemma 4.14(1)]{BezhanishviliMelzer2022b}. Therefore, $y \in \core U$, and so 
    there is $V\in\clopsup(X)$ such that $y \in V \subseteq U$.
    
    (3)$\Rightarrow$(4) Let $F \subseteq \ker U$ be a Scott upset. Let $y \in F \cap Y$. Then $y \in \ker U$, so $y \in U$ by \cref{lem:core-1}. Therefore, by (3), there is $V_y \in\clopsup(X)$ such that $y \in V_y \subseteq U$. Thus, \[
    F = \bigcup \{\upset y \mid y \in F \cap Y\} \subseteq \bigcup \{V_y \mid y \in F \cap Y\} \subseteq U.
    \]
    Because $F$ is closed, it is compact. Therefore, since a finite union of clopen Scott upsets is a clopen Scott upset, there is $V \in \clopsup(X)$ such that $F \subseteq V \subseteq U$.
    
    (4)$\Rightarrow$(1). By \cref{lem:core-1}, $\core U \subseteq \ker U$. For the reverse inclusion, it suffices to show that $V \ll U$ implies there is $W \in \clopsup(X)$ such that $V \subseteq W \subseteq U$. 
    Let $V \ll U$. 
    Then there is a Scott upset $F$ such that $V \subseteq F \subseteq U$ (see, e.g., \cite[Lemma 5.7]{BezhanishviliMelzer2022b}). But $U = \cl (\ker U)$, so $F \subseteq \ker U$ by (\ref{dagger}). Therefore, by (4), there is $W \in \clopsup(X)$ such that $F \subseteq W \subseteq U$, and hence $V \subseteq W \subseteq U$.
\end{proof}

\begin{lemma} \plabel{lem:proper-coherent}
    Let $f : X_1 \to X_2$ be an L-morphism between L-spaces. 
    \begin{enumerate}[ref=\thelemma(\arabic*)]
        \item If $f$ is proper and $X_1$ is an algebraic L-space, then $f$ is coherent.
        \item If $f$ is coherent and $X_2$ is an algebraic L-space, then $f$ is proper.
        \item If $X_1$ and $X_2$ are algebraic L-spaces, then $f$ is coherent iff $f$ is proper. \clabel{3}
    \end{enumerate}
\end{lemma}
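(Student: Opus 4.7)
The plan is to deduce all three parts from the key identification $\ker U = \core U$ on algebraic L-spaces, which is available via \cref{lem:scott-ext-1} (since every algebraic L-space is continuous by \cref{lem:core-3}). Once that identification is in hand, each direction of (1) and (2) is just a three-step chain of set inclusions using \cref{lem:core-1}, and (3) is immediate.

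For part (1), assume $f$ is proper and $X_1$ is an algebraic L-space, and let $U \in \clopup(X_2)$. Since $f^{-1}(U) \in \clopup(X_1)$ and $X_1$ is algebraic and continuous, \cref{lem:scott-ext-1} gives $\ker f^{-1}(U) = \core f^{-1}(U)$. Chaining with \cref{lem:core-1} and properness,
\[
f^{-1}(\core U) \subseteq f^{-1}(\ker U) \subseteq \ker f^{-1}(U) = \core f^{-1}(U),
\]
so $f$ is coherent.

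For part (2), assume $f$ is coherent and $X_2$ is an algebraic L-space, and let $U \in \clopup(X_2)$. Now the algebraicity hypothesis is applied to $U$ in $X_2$, so \cref{lem:scott-ext-1} gives $\ker U = \core U$. Chaining with coherence and \cref{lem:core-1},
\[
f^{-1}(\ker U) = f^{-1}(\core U) \subseteq \core f^{-1}(U) \subseteq \ker f^{-1}(U),
\]
so $f$ is proper. Part (3) then follows by applying (1) and (2) together.

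I do not expect any real obstacle: the preparatory lemmas do all the work, and the only point demanding a little care is tracking on which of $X_1, X_2$ the algebraicity hypothesis is imposed, since in (1) it is used to collapse $\ker$ and $\core$ on the preimage $f^{-1}(U) \in \clopup(X_1)$, while in (2) it is used to collapse them on $U \in \clopup(X_2)$.
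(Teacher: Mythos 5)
Your proposal is correct and matches the paper's own proof essentially verbatim: both parts are established by the same three-step inclusion chains, using \cref{lem:core-1} together with the identification $\ker = \core$ on the algebraic side via \cref{lem:core-3,lem:scott-ext-1}, applied to $f^{-1}(U)$ in part (1) and to $U$ in part (2). Part (3) follows from (1) and (2) exactly as in the paper.
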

\begin{proof}
    (1) Let $U \in \clopup(X_2)$. Then
    \begin{align*}
        f^{-1}(\core U) 
        &\subseteq f^{-1}(\ker U) &&\text{by \cref{lem:core-1}}\\
        &\subseteq \ker f^{-1}(U) && \text{since } f \text{ is proper}\\
        &= \core f^{-1}(U) &&\text{by \cref{lem:core-3,lem:scott-ext-1}}.
    \end{align*}
    (2) Let $U \in \clopup(X_2)$. Then
    
    \begin{align*}
        f^{-1}(\ker U) 
        &= f^{-1}(\core U) &&\text{by \cref{lem:core-3,lem:scott-ext-1}}\\
        &\subseteq \core f^{-1}(U) && \text{since } f \text{ is coherent}\\
        &\subseteq \ker f^{-1}(U) &&\text{by \cref{lem:core-1}}.
    \end{align*}
    
    (3) follows from (1) and (2).
\end{proof}

Putting \cref{lem:core-3,lem:proper-coherent-3} together, we obtain:

\begin{theorem} \label{thm: full sub}
\AlgL is a full subcategory of \CL.
\end{theorem}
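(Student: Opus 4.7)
The plan is to verify the two defining conditions of a full subcategory, each of which has already been isolated as a lemma in the preceding development. First, I would invoke \cref{lem:core-3} to see that every algebraic L-space is automatically a continuous L-space, so every object of \AlgL is an object of \CL. This takes care of the inclusion at the level of objects.

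Second, I would verify that the morphism sets agree on algebraic L-spaces. Since the morphisms of \AlgL are the coherent L-morphisms while the morphisms of \CL are the proper L-morphisms, fullness amounts to the assertion that, for L-morphisms $f : X_1 \to X_2$ between algebraic L-spaces, $f$ is coherent iff it is proper. But this is exactly \cref{lem:proper-coherent-3}. Combining the two observations yields the theorem immediately.

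Essentially no work remains at this step; the substance was absorbed into the preparatory lemmas. The real pivot, in hindsight, is \cref{lem:scott-ext-1}: once one knows that in an algebraic L-space every clopen upset satisfies $\ker U = \core U$, the two preservation conditions defining proper and coherent L-morphisms collapse onto each other, and fullness drops out with no further argument.
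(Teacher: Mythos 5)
Your proposal is correct and is essentially identical to the paper's own argument: the paper likewise obtains the theorem by combining \cref{lem:core-3} (every algebraic L-space is a continuous L-space) with \cref{lem:proper-coherent-3} (between algebraic L-spaces, coherent and proper L-morphisms coincide). Your closing remark that the equality $\ker U = \core U$ from \cref{lem:scott-ext-1} is what makes the two morphism conditions collapse also matches how the paper proves \cref{lem:proper-coherent-3}.
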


We are ready to prove the first main result of this section.

\begin{theorem} \label{thm: AlgFrm dual AlgL}
    \AlgFrm is dually equivalent to \AlgL.
\end{theorem}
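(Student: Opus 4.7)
The plan is to obtain this dual equivalence by restricting the dual equivalence $\CFrm \simeq \CL$ from \cref{hl-frames} along the inclusions $\AlgFrm \hookrightarrow \CFrm$ and $\AlgL \hookrightarrow \CL$. Two ingredients are needed: that these inclusions are \emph{full}, and that the functors $\functor X$ and $\functor D$ witnessing the continuous duality carry each subcategory into the other.

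First, the object-level correspondence is already in hand: by \cref{thm:alg-hb-2}, a frame $L$ is algebraic iff its Priestley space $X_L$ is an algebraic L-space. Since the Pultr--Sichler isomorphism $L \cong \clopup(X_L)$ is natural, applying this to $X = X_L$ for varying $L$ gives that an L-space $X$ lies in $\AlgL$ iff $\clopup(X) = \functor D(X)$ lies in $\AlgFrm$. Thus the restrictions of $\functor X$ and $\functor D$ act on objects in the intended way.

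Second, I would invoke fullness. $\AlgL$ is a full subcategory of $\CL$ by \cref{thm: full sub} (which rests on \cref{lem:proper-coherent-3}, identifying coherent and proper L-morphisms between algebraic L-spaces). Similarly, $\AlgFrm$ is a full subcategory of $\CFrm$ by \cref{rem: full sub alg}, since a proper frame homomorphism between algebraic frames preserves $\ll$ and hence sends compact elements to compact elements, so is coherent. Fullness on both sides guarantees that the restricted functors are well-defined on morphisms, and that the unit and counit of the original dual equivalence between $\CFrm$ and $\CL$ restrict to natural isomorphisms between the identity functors and the composites $\functor D \functor X$ and $\functor X \functor D$ on $\AlgFrm$ and $\AlgL$, respectively.

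The substantive content has been packaged into \cref{thm:alg-hb} (objects) and \cref{lem:proper-coherent,thm: full sub} (morphisms); once these are in place, the theorem follows formally, and there is no further obstacle to overcome.
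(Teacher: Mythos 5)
Your proposal is correct and follows essentially the same route as the paper: restrict the dual equivalence between \CFrm and \CL (\cref{hl-frames}) along the full inclusions given by \cref{rem: full sub alg} and \cref{thm: full sub}, with \cref{thm:alg-hb-2} (plus invariance of the defining conditions under the (co)unit isomorphisms) supplying the object-level correspondence. Nothing further is needed.
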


\begin{proof}
By \cref{rem: full sub alg}, \AlgFrm is a full subcategory of \CFrm. By \cref{thm: full sub}, \AlgL is a full subcategory of \CL. Thus, the result follows from \cref{hl-frames,thm:alg-hb-2}.
\end{proof}

Finally, we connect \AlgL with \KBSob.

\begin{lemma}
        Let $X$ be an SL-space, $Y$ its spatial part, and $U \subseteq X$. Then $U \in \clopsup(X)$ iff there is a compact open set $V$ of $Y$ such that $\cl V = U$. \label{lem:clopen-scott}
\end{lemma}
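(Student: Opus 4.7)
The plan is to use the natural correspondence $U \mapsto U \cap Y$ between $\clopup(X)$ and the open sets of $Y$ (see \cref{rem:iso opens Y}), and to translate the Scott-upset condition for $U$ directly into the compactness condition on $V := U \cap Y$. Throughout, the key identity I would exploit is that in an SL-space, $\cl(W \cap Y) = W$ for every $W \in \clopup(X)$, and more generally $\cl W \cap Y = W \cap Y$ for every open upset $W$ of $X$ (this is what is used in the proof of \cref{lem:scott-ext}).

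For the forward direction, suppose $U \in \clopsup(X)$ and set $V = U \cap Y$. Since $U \in \clopup(X)$, $V$ is open in $Y$ and $\cl V = U$ by the SL-property. To verify compactness of $V$ in $Y$, I would take an open cover $\{V_i\}$ of $V$ and lift each $V_i$ to some $W_i \in \clopup(X)$ with $W_i \cap Y = V_i$. Then $W := \bigcup W_i$ is an open upset of $X$ and $V \subseteq W \cap Y$, so $U = \cl V \subseteq \cl W$. Because $U$ is a Scott upset, condition $(\dagger)$ yields $U \subseteq W = \bigcup W_i$. Compactness of the closed set $U$ in $X$ then gives a finite subcollection $W_{i_1}, \dots, W_{i_n}$ covering $U$, and intersecting with $Y$ produces a finite subcover of $V$.

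For the reverse direction, suppose $V$ is compact open in $Y$ with $\cl V = U$. Since $V$ is open in $Y$, $V = W \cap Y$ for some $W \in \clopup(X)$, and then $U = \cl V = \cl(W \cap Y) = W$ is a clopen upset. It remains to show $U$ satisfies $(\dagger)$. Let $W'$ be an open upset of $X$ with $U \subseteq \cl W'$. Intersecting with $Y$ and using $\cl W' \cap Y = W' \cap Y$ gives $V = U \cap Y \subseteq W' \cap Y$. Writing $W' = \bigcup W_j$ with $W_j \in \clopup(X)$, compactness of $V$ in $Y$ yields a finite subfamily with $V \subseteq (W_{j_1} \cup \dots \cup W_{j_n}) \cap Y$; taking closures in $X$ (the finite union is again a clopen upset) delivers $U = \cl V \subseteq W_{j_1} \cup \dots \cup W_{j_n} \subseteq W'$, as desired.

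The main obstacle I expect is keeping the two topologies straight: one has to lift open sets in $Y$ to clopen upsets of $X$, pass between $\cl V$ in $X$ and $V$ in $Y$, and apply compactness in the correct space. The crux is that the SL-property $\cl(W \cap Y) = W$ transports finite subcovers obtained in $X$ back to $Y$, and vice versa, so that Scott-upset $U$'s satisfying $(\dagger)$ correspond exactly to compactness of $U \cap Y$ in $Y$.
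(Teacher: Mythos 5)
Your proof is correct, but it takes a genuinely different route from the paper. The paper's proof is essentially a two-line application of cited machinery: by the Priestley-space version of Hofmann--Mislove (Theorem 5.7 of the companion paper), Scott upsets of $X$ correspond to compact saturated subsets of $Y$ via $F \mapsto F \cap Y$ and $K \mapsto \upset K$; the forward direction then reads off that $U \cap Y$ is compact, open, and dense in $U$, while the backward direction uses that $\upset V$ is a Scott upset together with the Esakia-space identity $\cl \upset V = \upset \cl V$ to conclude $U = \upset V$. You instead work directly with the characterization $(\dagger)$ and prove the compactness transfer by hand: lifting an open cover of $V$ to clopen upsets of $X$, using $(\dagger)$ to pass from $U \subseteq \cl W$ to $U \subseteq W$, and extracting finite subcovers via compactness of $U$ in $X$ (forward), and conversely writing an open upset $W'$ as a union of clopen upsets, using $\cl W' \cap Y = W' \cap Y$ and compactness of $V$ in $Y$, and taking closures of a finite (hence clopen) union (backward). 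Both directions of your argument check out; the facts you invoke (the isomorphism $U \mapsto U \cap Y$ for SL-spaces, $\cl W \cap Y = W \cap Y$ for open upsets, and the fact that open upsets of a Priestley space are unions of clopen upsets) are all available. What your approach buys is self-containment: it avoids both the external Hofmann--Mislove-type theorem and the Esakia identity, at the cost of redoing, in this special clopen case, the cover-lifting argument that the cited theorem encapsulates; the paper's proof is shorter given that machinery and makes the conceptual content (Scott upsets are exactly the closures of compact saturated sets) explicit.
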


\begin{proof}
By \cite[Theorem 5.7]{BezhanishviliMelzer2022}, the poset of Scott upsets of $X$ is isomorphic to the poset of compact saturated sets of $Y$. The  isomorphism is obtained by sending a Scott upset $F \subseteq X$ to the compact saturated set $F \cap Y$, and a compact saturated set $K \subseteq Y$ to the Scott upset $\upset K$.

($\Rightarrow$) Suppose $U$ is a clopen Scott upset. Then $V := U \cap Y$ is a compact saturated subset of $Y$. 
Moreover, $V$ is an open subset of $Y$ since $U \in \clopup(X)$. Furthermore, $\cl V = U$ by \cref{rem:iso opens Y} because $X$ is an SL-space.

($\Leftarrow$) Suppose there is a compact open set $V$ of $Y$ such that $\cl V = U$. 
Then $\upset V$ is a Scott upset of $X$. Since $V$ is open and $X$ is an SL-space, there is $U' \in \clopup(X)$ such that $V = U' \cap Y$ and $\cl V = U'$ (see \cref{rem:iso opens Y}). Therefore, $U = \cl V = U'$, and so $U$ is a clopen upset of $X$.
Moreover, 
\[
U = \upset U = \upset \cl V = \cl \upset V = \upset V,
\]
where the third equality follows from \cite[Theorem 3.1.2]{Esakia2019} since $X$ is an Esakia space.
Thus, 
$U$ is a Scott upset.
\end{proof}

\begin{theorem} \label{thm: HB iff KB}
    Let $X$ be an SL-space and $Y$ its spatial part. Then $X$ is an algebraic L-space iff $Y$ is a compactly based sober space.
\end{theorem}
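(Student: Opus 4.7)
The plan is to transport the algebraic property on $X$ across the bijection $U\mapsto U\cap Y$ between $\clopup(X)$ and the opens of $Y$ (\cref{rem:iso opens Y}), under which $U=\cl(U\cap Y)$ whenever $X$ is an SL-space. The key additional ingredient is \cref{lem:clopen-scott}, which shows that this bijection matches clopen Scott upsets of $X$ with compact opens of $Y$. Soberness of $Y$ is given for free by \cref{thm: SL dualities}, so only the existence of a base of compact opens in $Y$ needs to be addressed.

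For the forward implication, I would assume $X$ is algebraic; by \cref{lem:core-3} it is then continuous, so \cref{lem:scott-ext} applies, and in particular its condition \cref{lem:scott-ext-needed} supplies, for each $U\in\clopup(X)$ and each $y\in U\cap Y$, a clopen Scott upset $V$ with $y\in V\subseteq U$. Given an open set $W\subseteq Y$ and a point $y\in W$, I pull $W$ back to a clopen upset $U$ with $U\cap Y=W$, apply this condition, and push the resulting $V$ forward to $K:=V\cap Y$, which is compact open in $Y$ by \cref{lem:clopen-scott} and satisfies $y\in K\subseteq W$.

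For the reverse implication, I would assume $Y$ is compactly based and, for $U\in\clopup(X)$, show $U\cap Y\subseteq\core U$; density of $\core U$ in $U$ then follows because $U=\cl(U\cap Y)$. Given $y\in U\cap Y$, the hypothesis yields a compact open $K$ with $y\in K\subseteq U\cap Y$, and then $V:=\cl K$ is a clopen Scott upset of $X$ by \cref{lem:clopen-scott} satisfying $V\subseteq U$ (since $K\subseteq U$ and $U$ is closed), so $y\in V\subseteq U$ and hence $y\in\core U$.

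The argument is largely bookkeeping once \cref{lem:clopen-scott} is in hand. The one conceptual point worth isolating is that, in an SL-space, density of $\core U$ in $U$ reduces to the pointwise statement $U\cap Y\subseteq\core U$, and this is precisely the pointwise form of the compact-open-base condition on $Y$ under the correspondence of \cref{lem:clopen-scott}. The only real care needed is to remember to invoke \cref{lem:core-3} in the forward direction so that \cref{lem:scott-ext} is legitimately available.
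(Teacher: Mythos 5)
Your proof is correct and follows essentially the same route as the paper: both directions hinge on \cref{lem:clopen-scott} together with the correspondence of \cref{rem:iso opens Y}, and your forward direction (via \cref{lem:core-3,lem:scott-ext-needed}) is the paper's argument verbatim. The only difference is in the converse, where the paper first deduces that $X$ is a continuous L-space (via \cref{hl-spaces}, since compactly based implies locally compact) so as to apply \cref{lem:scott-ext-needed}, whereas you bypass this by noting that $U \cap Y \subseteq \core U$ and $\cl(U \cap Y) = U$ already give density of $\core U$ in $U$ --- a slight streamlining of the same argument.
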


\begin{proof}
The spatial part of an L-space is always sober (see, e.g., \cite[Lemma 4.11]
{BezhanishviliMelzer2022b}). Therefore, it is sufficient to show that $X$ is an algebraic L-space iff $Y$ is compactly based.
    First suppose that $X$ is an algebraic L-space. Let $V \subseteq Y$ be open and $y \in V$. Set $U = \cl V$. Then $U$ is a clopen upset of $X$ by \cref{rem:iso opens Y}. Moreover, it follows from \cite[Lemma 4.14(2)]{BezhanishviliMelzer2022b} that $U \cap Y = \cl V \cap Y = V$, so $y \in U \cap Y$. 
    By \cref{lem:core-3,lem:scott-ext-needed}, there is $W \in\clopsup(X)$ such that $y \in W \subseteq U$. 
    Therefore, $y \in W \cap Y \subseteq U \cap Y = V$. By \cref{lem:clopen-scott}, $W\cap Y$ is a compact open subset of $Y$. Thus, $Y$ is compactly based.
    
    Conversely, suppose that $Y$ is compactly based and $U \in \clopup(X)$. Since $Y$ is locally compact, $X$ is a continuous L-space by \cref{hl-spaces}. Therefore, by \cref{lem:scott-ext-needed}, it suffices to show that for each $y \in U \cap Y$ there is $V \in \clopsup(X)$ such that $y \in V \subseteq U$. Because $U \cap Y$ is an open subset of $Y$ and $Y$ is compactly based, there is a compact open $K \subseteq Y$ such that $y \in K \subseteq U \cap Y$. Therefore, $\cl K \in \clopsup(X)$ by \cref{lem:clopen-scott}. Moreover, $y \in \cl K \subseteq \cl (U \cap Y) = U$. Thus, $X$ is an algebraic L-space.
\end{proof}

By \cref{thm: full sub}, \AlgL is a full subcategory of \CL. By \cref{rem: full sub kbsob}, \KBSob is a full subcategory of \LCSob. Thus, as an immediate consequence of \cref{hl-spaces,{thm: HB iff KB}}, we obtain:

\begin{corollary} \label{cor: AlgL equiv KBSob}
        \AlgL is equivalent to \KBSob.
\end{corollary}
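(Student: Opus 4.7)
The plan is to recognize this as a routine restriction of the already-established equivalence $\functor{Y} : \CL \to \LCSob$ from \cref{hl-spaces}. Concretely, I would invoke the general categorical fact that an equivalence of categories restricts to an equivalence between any pair of full subcategories whose object classes correspond under the equivalence. So the work reduces to (i) checking that the inclusions on both sides are full, and (ii) checking that objects of \AlgL correspond precisely to objects of \KBSob under $\functor{Y}$.

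For the fullness, I would cite \cref{thm: full sub}, which says $\AlgL$ is a full subcategory of $\CL$, and \cref{rem: full sub kbsob}, which says $\KBSob$ is a full subcategory of $\LCSob$. For the object correspondence, I would appeal directly to \cref{thm: HB iff KB}. To do so, I first observe that every continuous L-space is automatically an SL-space: indeed, the equivalence $\CL \simeq \LCSob$ of \cref{hl-spaces} is realized by the spatial-part functor $\functor{Y}$ (as noted in \cref{rem:Y functor}), and this requires the L-space to be SL. Hence for any $X \in \CL$ with spatial part $Y = \functor{Y}(X)$, \cref{thm: HB iff KB} applies and yields $X \in \AlgL$ iff $Y \in \KBSob$.

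Putting these pieces together, the restriction of $\functor{Y}$ to $\AlgL$ lands in $\KBSob$; conversely, given $Y \in \KBSob \subseteq \LCSob$, the quasi-inverse of $\functor{Y}$ produces some $X \in \CL$ with $\functor{Y}(X) \cong Y$, and \cref{thm: HB iff KB} forces $X \in \AlgL$. Thus the restricted functor is essentially surjective, and it is fully faithful because both inclusions are full. I do not anticipate any real obstacle: the substantive content has already been packaged into \cref{thm: HB iff KB,hl-spaces}, and the corollary is obtained by formal assembly.
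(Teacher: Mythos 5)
Your proposal is correct and follows essentially the same route as the paper: the paper also obtains the corollary immediately from the fullness of $\AlgL \leqslant \CL$ (\cref{thm: full sub}) and $\KBSob \leqslant \LCSob$ (\cref{rem: full sub kbsob}), combined with the equivalence $\CL \simeq \LCSob$ of \cref{hl-spaces} and the object-level correspondence of \cref{thm: HB iff KB}. Your extra remark that continuous L-spaces are SL-spaces (so that \cref{thm: HB iff KB} applies) is a detail the paper leaves implicit, but it is consistent with its framework.
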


Putting together \cref{thm: AlgFrm dual AlgL,cor: AlgL equiv KBSob}, we obtain \cref{frm-dualities-alg} that $\AlgFrm$ is dually equivalent to \KBSob.

\section{Priestley duality for arithmetic, coherent, and Stone frames} \label{sec 5}

In this final section we describe Priestley duals of arithmetic, coherent, and Stone frames. We also connect them
to stably compactly based, spectral, and Stone spaces, thus obtaining alternative proofs of 
\namecref{frm-dualities} \hyperref[frm-dualities]{\labelcref{frm-dualities}(2,3,4)}.

We conclude the paper by pointing out a connection to Priestley duality for bounded distributive lattices and Stone duality for boolean algebras.  

\subsection{Arithmetic frames}

We recall (see \cref{def: kernel-stable}) that an L-space $X$ is kernel-stable if $\ker(U \cap V) = \ker U \cap \ker V$ for all $U,V \in \clopup(X)$.

\begin{definition} \label{def: ArithL}
    \begin{enumerate}
    \item[]
    \item An \emph{arithmetic} L-space is a kernel-stable algebraic L-space.
    \item Let \ArithL be the full subcategory  of \AlgL consisting of arithmetic L-spaces.
    \end{enumerate}
\end{definition}

\begin{lemma} \label{lem:scottstable}
    Let $X$ be an algebraic L-space. Then $X$ is an arithmetic L-space iff $U \cap V \in \clopsup(X)$ for every $U,V \in \clopsup(X)$.
\end{lemma}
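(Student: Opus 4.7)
The plan is to prove both directions by reducing everything to the identity $\core U = \ker U$ that \cref{lem:scott-ext-1} grants on any continuous L-space, combined with the characterization of clopen Scott upsets as fixed points of $\core$ from \cref{lem:core-4}. Throughout, I use that an algebraic L-space is continuous (\cref{lem:core-3}), so \cref{lem:scott-ext-1} is available.

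For the forward direction, assume $X$ is an arithmetic L-space and let $U,V\in\clopsup(X)$. By \cref{lem:core-4}, $\core U = U$ and $\core V = V$, and by \cref{lem:scott-ext-1} these equal $\ker U$ and $\ker V$, respectively. Kernel-stability then yields
\[
\ker(U\cap V) = \ker U \cap \ker V = U\cap V.
\]
Applying \cref{lem:scott-ext-1} in the other direction gives $\core(U\cap V) = \ker(U\cap V) = U\cap V$, so by \cref{lem:core-4} the clopen upset $U\cap V$ is a Scott upset, i.e., $U\cap V\in\clopsup(X)$.

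For the reverse direction, assume the intersection of any two clopen Scott upsets is a clopen Scott upset, and let $U,V\in\clopup(X)$. The inclusion $\ker(U\cap V)\subseteq \ker U\cap \ker V$ follows from the monotonicity of $\ll$ on $\clopup(X)$, which is immediate from its definition: if $A\ll U\cap V$ and $W$ is an open upset with $U\subseteq \cl W$, then $U\cap V\subseteq \cl W$, so $A\subseteq W$, giving $A\ll U$ (and symmetrically $A\ll V$). For the reverse inclusion, let $x\in \ker U\cap \ker V$. By \cref{lem:scott-ext-1}, $x\in \core U\cap \core V$, so there exist $A,B\in\clopsup(X)$ with $x\in A\subseteq U$ and $x\in B\subseteq V$. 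By the hypothesis, $A\cap B\in\clopsup(X)$, and $x\in A\cap B\subseteq U\cap V$, so $x\in \core(U\cap V) = \ker(U\cap V)$ by \cref{lem:scott-ext-1} once more.

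The only conceptual subtlety is the monotonicity of $\ll$ used in the easy inclusion, but this is a direct consequence of the definition and does not require the algebraic hypothesis. All remaining work is a straightforward bookkeeping exercise of passing between the relations $\core$ and $\ker$ via the equivalences furnished by \cref{lem:scott-ext,lem:core}.
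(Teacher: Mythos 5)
Your proof is correct and follows essentially the same route as the paper: both directions rest on the identity $\ker U=\core U$ from \cref{lem:scott-ext-1} together with the fixed-point characterization of clopen Scott upsets. The only cosmetic difference is in the right-to-left direction, where you verify the two inclusions directly (the nontrivial one pointwise via the definition of $\core$), whereas the paper tests containment against compact clopen upsets $W$; both arguments are sound.
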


\begin{proof}
    For the left-to-right implication,
    let $U,V \in \clopsup(X)$. By \cref{lem:compact}, $\ker U=U$ and $\ker V=V$. Therefore, since $X$ is kernel-stable, $\ker (U\cap V) = \ker U \cap \ker V = U \cap V$. Thus, $U\cap V \in \clopsup(X)$ using \cref{lem:compact} again.
    
    For the right-to-left implication,
    suppose $U_1,U_2 \in \clopup(X)$. It suffices to show that $W \subseteq \ker U_1 \cap \ker U_2$ iff $W \subseteq \ker (U_1 \cap U_2)$ for each $W \in \clopup(X)$. 
    Since $W$ is compact, by \cref{lem:scott-ext-1} and the assumption that $V_1, V_2 \in\clopsup(X) \Rightarrow V_1 \cap V_2 \in\clopsup(X)$, 
    \begin{align*}
    W \subseteq \ker U_1 \cap \ker U_2
    &\iff W \subseteq \core U_1 \cap \core U_2 \\
    &\iff \exists V_1, V_2 \in\clopsup(X) : W \subseteq V_1 \subseteq U_1 \text{ and }W \subseteq V_2 \subseteq U_2 \\
        &\iff\exists V \in \clopsup(X) : W \subseteq V \subseteq U_1 \cap U_2\\
        &\iff W \subseteq \core (U_1 \cap U_2) \\
        &\iff W \subseteq \ker (U_1 \cap U_2). \qedhere
    \end{align*}
    \end{proof}

\begin{lemma} \label{lem: stably compactly based}
    Let $Y$ be a compactly based sober space. Then $Y$ is stably locally compact iff $Y$ is stably compactly based.
\end{lemma}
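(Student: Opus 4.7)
The plan is to verify the two directions separately. The forward direction will be immediate from the fact that every compact open is saturated, while for the converse my key tools will be the Hofmann--Mislove theorem together with the observation from \cref{rem: comp sat} that in a compactly based space every compact saturated set is an intersection of compact opens.

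For the forward implication, I would note that under the standing hypothesis that $Y$ is compactly based and sober, being stably locally compact reduces to being coherent. Since every open set is saturated in the specialization order, compact opens are in particular compact saturated, so the coherence condition directly yields that the intersection of any two compact opens is compact. Combined with the sobriety already in place, this says precisely that $Y$ is stably compactly based.

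For the converse, I would assume $Y$ is stably compactly based. Since compactly based spaces are locally compact and sobriety is part of the hypothesis, it remains only to establish coherence. Given compact saturated $K_1, K_2 \subseteq Y$, by \cref{rem: comp sat} I can write $K_i = \bigcap \mathcal{F}_i$ where $\mathcal{F}_i = \{U : U \text{ compact open},\ K_i \subseteq U\}$. The stably-compactly-based hypothesis says that the intersection of two compact opens is itself a compact open, so each $\mathcal{F}_i$ is closed under binary intersection; consequently the family $\mathcal{F} := \{U \cap V : U \in \mathcal{F}_1,\ V \in \mathcal{F}_2\}$ is a filtered family of compact opens whose total intersection is exactly $K_1 \cap K_2$.

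The main (and essentially only) obstacle is to conclude that this filtered intersection is itself compact. For this I plan to invoke the Hofmann--Mislove theorem, which ensures that in any sober space a filtered intersection of compact saturated sets is again compact saturated. Applying this to the filtered family $\mathcal{F}$ of compact (saturated) opens yields that $K_1 \cap K_2$ is compact, thereby establishing coherence of $Y$ and completing the proof.
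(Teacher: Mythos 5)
Your proposal is correct and follows essentially the same route as the paper: the paper also handles the forward direction as immediate, and for the converse writes each compact saturated set as an intersection of compact opens (via the observation in \cref{rem: comp sat}), forms the same filtered family of pairwise intersections of compact opens, and applies the Hofmann--Mislove theorem to conclude compactness of the intersection.
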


\begin{proof}
    The left-to-right implication 
    is trivial.
    For the right-to-left implication, 
    let $A, B \subseteq Y$ be compact saturated. Since $Y$ is compactly based, every compact saturated set is an intersection of compact open sets (see \cref{rem: comp sat}). Therefore, $A \cap B = \bigcap \mathcal F$, where 
    \[\mathcal F = \{U \cap V \mid U,V \text{ compact open with }A \subseteq U \text{ and } B \subseteq V\}.\]
    Since $Y$ is stably compactly based, $\mathcal F$ is closed under finite intersections. Thus, the Hofmann--Mislove Theorem (see, e.g., \cite[Corollary II-1.22.]{Compendium2003}) implies that $\bigcap \mathcal F$ is compact. Consequently, $A \cap B$ is compact.
\end{proof}

\begin{theorem} \label{thm:arithmetic}
    Let $L$ be an algebraic frame, $X_L$ its Priestley space, and $Y_L \subseteq X_L$ the spatial part of $X_L$. The following are equivalent.
    \begin{enumerate}
        \item $L$ is an arithmetic frame.
        \item $X_L$ is an arithmetic L-space.
        \item $Y_L$ is a stably compactly based space.
    \end{enumerate}
\end{theorem}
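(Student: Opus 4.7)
My plan is to establish the equivalence by proving (1) $\Leftrightarrow$ (2) and (2) $\Leftrightarrow$ (3) separately. Throughout, I will use that because $L$ is algebraic, \cref{thm:alg-hb-2} already gives that $X_L$ is an algebraic L-space and \cref{thm: HB iff KB} already gives that $Y_L$ is a compactly based sober space. Also, since algebraic frames are spatial (as noted in the remark after \cref{frm-dualities}), $X_L$ is an SL-space, so \cref{lem:clopen-scott} will be available.

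For (1) $\Leftrightarrow$ (2), I would reduce arithmeticity of the algebraic frame $L$ to closure of $K(L)$ under binary meets (as stated in the remark following the definition of arithmetic frames, which recalls that for algebraic lattices $\ll$ is stable iff the binary meet of compacts is compact). The Priestley map $\varphi\colon L\to \clopup(X_L)$ is a meet-preserving bijection, and by \cref{lem:compact} it restricts to a bijection between $K(L)$ and $\clopsup(X_L)$. Hence $K(L)$ is closed under binary meets iff $\clopsup(X_L)$ is closed under binary intersections. Since $X_L$ is an algebraic L-space, \cref{lem:scottstable} identifies the latter with kernel-stability of $X_L$, which is precisely the definition of $X_L$ being an arithmetic L-space.

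For (2) $\Leftrightarrow$ (3), my approach is to transfer kernel-stability across the bijection of \cref{lem:clopen-scott}. Explicitly, $U \mapsto U \cap Y_L$ and $V \mapsto \cl V$ are mutually inverse bijections between $\clopsup(X_L)$ and the compact opens of $Y_L$. Given $U_1, U_2 \in \clopsup(X_L)$ with corresponding compact opens $V_1 = U_1 \cap Y_L$ and $V_2 = U_2 \cap Y_L$, we have $V_1 \cap V_2 = (U_1 \cap U_2) \cap Y_L$, which is open in $Y_L$. Since $X_L$ is an SL-space, $U_1 \cap U_2 = \cl((U_1 \cap U_2) \cap Y_L)$ by \cref{rem:iso opens Y}, so $V_1 \cap V_2$ is compact iff $U_1 \cap U_2 \in \clopsup(X_L)$. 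Conversely, any pair of compact opens arises this way. Thus $\clopsup(X_L)$ is closed under binary intersections iff the compact opens of $Y_L$ are; by \cref{lem:scottstable} the former is kernel-stability of $X_L$, and by definition the latter is the stably compactly based condition (since $Y_L$ is already compactly based sober).

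The main obstacle is the bookkeeping in the second step: verifying that the bijection of \cref{lem:clopen-scott} exchanges the two closure-under-intersection conditions. This turns on the identity $U_1 \cap U_2 = \cl((U_1 \cap U_2) \cap Y_L)$ for clopen upsets, which reflects density of $Y_L$ in $X_L$, and on the observation that $V_1 \cap V_2$ is open in $Y_L$ before one can ask whether it is compact. Once these points are cleanly in hand, both equivalences follow from \cref{lem:scottstable} and \cref{lem:compact} with no further work.
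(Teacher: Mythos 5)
Your proof is correct, and your treatment of (1)$\Leftrightarrow$(2) is essentially the paper's own argument: reduce arithmeticity to closure of $K(L)$ under binary meets, transport this across $\varphi$ via \cref{lem:compact}, and invoke \cref{lem:scottstable}. Where you genuinely diverge is (2)$\Leftrightarrow$(3). The paper routes this equivalence through the continuous-frame machinery: by \cref{lem:scottstable} an algebraic L-space is arithmetic iff it is stably continuous, then it cites the earlier result that $X_L$ is a stably continuous L-space iff $Y_L$ is stably locally compact, and finally uses \cref{lem: stably compactly based} (whose nontrivial direction rests on the Hofmann--Mislove theorem) to pass from stably locally compact to stably compactly based for compactly based sober spaces. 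You instead stay entirely at the level of compact opens: you transfer the closure-under-binary-intersection condition for $\clopsup(X_L)$ directly across the correspondence of \cref{lem:clopen-scott}, using $\cl\bigl((U_1\cap U_2)\cap Y_L\bigr)=U_1\cap U_2$ from \cref{rem:iso opens Y}, and then read off the stably compactly based condition from the definition, since $Y_L$ is already compactly based and sober by \cref{thm: HB iff KB}. Your route is more elementary and self-contained for this particular theorem, avoiding both the external equivalence for stably continuous L-spaces and Hofmann--Mislove; the paper's route buys uniformity with its treatment of the continuous case and, via \cref{lem: stably compactly based}, the independently needed fact that stably compactly based spaces are stably locally compact (used for the inclusion $\SKBSp\leqslant\SLCSp$), which your argument does not reprove. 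The only bookkeeping point in your version is that \cref{lem:clopen-scott} is stated as an existence claim, so the "mutually inverse bijections" formulation needs the one-line observation that an open $V\subseteq Y_L$ with $\cl V=U$ must equal $U\cap Y_L$; this follows from \cref{rem:iso opens Y} exactly as you indicate, so there is no gap.
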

\begin{proof}
    Since $L$ is an algebraic frame, $X_L$ is an algebraic L-space by \cref{thm:alg-hb}, and hence $Y_L$ is a compactly based sober space by \cref{thm: HB iff KB}.

    (1)$\Leftrightarrow$(2) Suppose $L$ is an arithmetic frame. Let $\varphi(a),\varphi(b) \in \clopsup(X_L)$. Then $a, b \in K(L)$ by \cref{lem:compact}. Since $L$ is an arithmetic frame, $a \wedge b \in K(L)$. Therefore, $\varphi(a) \cap \varphi(b) = \varphi(a \wedge b)$ is a Scott upset,  again by \cref{lem:compact}. Thus, $X_L$ is an arithmetic L-space by \cref{lem:scottstable}.
    
    Conversely, suppose $X_L$ is an arithmetic L-space.
    Let $a,b \in K(L)$. By \cref{lem:compact}, $\varphi(a),\varphi(b)$ are clopen Scott upsets. By \cref{lem:scottstable}, $\varphi(a \wedge b) = \varphi(a) \cap \varphi(b)$ is a Scott upset. Therefore, $a \wedge b \in K(L)$, again by \cref{lem:compact}. Thus, $L$ is an arithmetic frame.
    
    (2)$\Leftrightarrow$(3) Since $X_L$ is an algebraic L-space, 
    $X_L$ is an arithmetic L-space iff $X_L$ is a stably continuous L-space by \cref{lem:scottstable}. But $X_L$ is a stably continuous L-space iff $Y_L$ is a stably locally compact space by \cite[Theorem 6.7]{BezhanishviliMelzer2022b}. However, since $Y_L$ is a compactly based sober space, $Y_L$ is stably locally compact iff $Y_L$ is stably compactly based by \cref{lem: stably compactly based}. Thus, $X_L$ is an arithmetic L-space iff $Y_L$ is a stably compactly based space.
\end{proof}

As a consequence of \cref{thm: AlgFrm dual AlgL}, \cref{cor: AlgL equiv KBSob}, and \cref{thm:arithmetic}, we arrive at the first main result of this section:

\begin{theorem} \label{cor: duality for ArithFrm}
    \ArithL is equivalent to \SKBSp and dually equivalent to \ArithFrm.
\end{theorem}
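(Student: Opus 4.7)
The plan is to obtain \cref{cor: duality for ArithFrm} by restricting the already-established equivalences to the appropriate full subcategories, using the three-way characterization in \cref{thm:arithmetic} to verify that objects match up correctly. Concretely, I will take the dual equivalence $\AlgFrm \leftrightsquigarrow \AlgL$ from \cref{thm: AlgFrm dual AlgL} together with the equivalence $\AlgL \leftrightarrow \KBSob$ from \cref{cor: AlgL equiv KBSob}, and restrict each of them to \ArithFrm, \ArithL, and \SKBSp respectively.

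For the dual equivalence between \ArithFrm and \ArithL, the functors $\functor X$ and $\functor D$ already witness $\AlgFrm \leftrightsquigarrow \AlgL$, and by the equivalence (1)$\Leftrightarrow$(2) of \cref{thm:arithmetic}, an algebraic frame $L$ is arithmetic iff its Priestley dual $X_L$ is an arithmetic L-space. This immediately shows that $\functor X$ sends \ArithFrm into \ArithL. For the converse direction, given an arithmetic L-space $X$, I first identify it with the Priestley dual of the frame $\clopup(X)$, which is algebraic by \cref{thm:alg-hb-2} via Pultr--Sichler duality; then the same equivalence of \cref{thm:arithmetic} shows that $\clopup(X)$ is arithmetic, so $\functor D$ sends \ArithL into \ArithFrm. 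Similarly, for the equivalence between \ArithL and \SKBSp, the functor $\functor Y$ of \cref{rem:Y functor} (together with its quasi-inverse) implements $\AlgL \leftrightarrow \KBSob$, and the equivalence (2)$\Leftrightarrow$(3) of \cref{thm:arithmetic} says that an algebraic L-space $X$ is arithmetic iff $\functor Y(X)$ is stably compactly based. Since the inclusions $\ArithFrm \leq \AlgFrm$, $\ArithL \leq \AlgL$, and $\SKBSp \leq \KBSob$ are all full, no morphism-level work is required.

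Composing the two restricted equivalences then yields $\ArithFrm \leftrightsquigarrow \ArithL \leftrightarrow \SKBSp$, which is the assertion of the theorem. I do not expect any real obstacle: the substantive work has already been carried out in \cref{thm:arithmetic,thm: AlgFrm dual AlgL,cor: AlgL equiv KBSob}, and the present proof is essentially a book-keeping argument that restricts functors to full subcategories carved out by an object-level property. The only conceptual point worth emphasizing is that the definition of an arithmetic L-space (a kernel-stable algebraic L-space) was tailored precisely so that the three conditions of \cref{thm:arithmetic} align under the duality, which is what makes the restriction argument go through so cleanly.
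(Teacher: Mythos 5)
Your proposal is correct and follows essentially the same route as the paper, which obtains the theorem as a direct consequence of \cref{thm: AlgFrm dual AlgL}, \cref{cor: AlgL equiv KBSob}, and \cref{thm:arithmetic} by restricting the established (dual) equivalences to the full subcategories \ArithFrm, \ArithL, and \SKBSp. Your write-up merely spells out the bookkeeping that the paper leaves implicit.
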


As a corollary we obtain \cref{frm-dualities-arith} that \ArithFrm is dually equivalent to \SKBSp.

\subsection{Coherent frames}

We next turn our attention to Priestley duals of coherent frames. Since coherent frames are exactly compact arithmetic frames, we obtain that Priestley duals of coherent frames are exactly arithmetic L-spaces that are L-compact (see \cref{lem:compact}). 
We then connect L-compact arithmetic L-spaces with spectral spaces to obtain the well-known duality between \CohFrm and \Spec discussed in \cref{frm-dualities-coh}.

\begin{definition} \label{def: CohL}
    \begin{enumerate}
    \item[]
    \item A \emph{coherent} L-space is an L-compact arithmetic L-space.
    \item Let \CohL be the full subcategory  of \ArithL consisting of coherent L-spaces.
    \end{enumerate}
\end{definition}

\begin{lemma}[{\cite[Lemma 6.15]{BezhanishviliMelzer2022b}}] \label{lem:compact=tight}
    Let $X$ be an SL-space and $Y$ its spatial part. Then $X$ is L-compact iff $Y$ is compact.
\end{lemma}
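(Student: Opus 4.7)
The plan is to go through an intermediate characterization: $X$ is L-compact if and only if $X$ itself belongs to $\clopsup(X)$ if and only if $Y$ is compact. This routes the argument through the Scott-upset structure that \cref{lem:clopen-scott} already translates between $X$ and $Y$.

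First I would show that $X$ is L-compact if and only if $X\in\clopsup(X)$. For the forward direction, suppose $\ker X = X$. Compactness of the Priestley topology on $X$ gives finitely many $V_1,\dots,V_n\in\clopup(X)$ with $V_i\ll X$ and $X=V_1\cup\cdots\cup V_n$; directly from the definition of $\ll$, a finite union of clopen upsets that are way-below $X$ is again way-below $X$, so $X\ll X$. I would then apply \cite[Lemma~5.7]{BezhanishviliMelzer2022b} (the tool that promotes $V\ll U$ to a Scott upset $F$ with $V\subseteq F\subseteq U$) to the instance $X\ll X$: the sandwich forces $F=X$, so $X$ is a Scott upset. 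Conversely, if $X$ is a Scott upset, then (\ref{dagger}) reads exactly as $X\ll X$, and so $X$ itself witnesses $X\subseteq\ker X$.

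Next I would connect $X\in\clopsup(X)$ to compactness of $Y$ using \cref{lem:clopen-scott}. If $Y$ is compact, it is a compact open subset of itself, and $\cl Y=X$ because $X$ is an SL-space, so \cref{lem:clopen-scott} gives $X\in\clopsup(X)$. For the converse, if $X\in\clopsup(X)$, \cref{lem:clopen-scott} supplies a compact open $V\subseteq Y$ with $\cl V = X$. Writing $V = U'\cap Y$ for some $U'\in\clopup(X)$ (which exists because $V$ is open in $Y$) and invoking \cref{rem:iso opens Y} yields $U'=\cl(U'\cap Y)=\cl V = X$, hence $V = X\cap Y = Y$, so $Y$ is compact.

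The main subtlety is the converse half of the second step: a compact open $V$ of $Y$ that is merely dense in $X$ need not coincide with $Y$ in an arbitrary L-space, and the argument genuinely uses that $X$ is an SL-space so that the assignment $U\mapsto U\cap Y$ is an order-isomorphism $\clopup(X)\to \mathcal{O}(Y)$ whose inverse is closure in $X$. Once that point is handled, the rest is a direct chain of equivalences assembled from results already recalled in the paper.
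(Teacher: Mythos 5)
Your proof is correct, and it differs from the paper in an essential way: the paper gives no proof of this lemma at all, importing it verbatim as \cite[Lemma 6.15]{BezhanishviliMelzer2022b}, whereas you derive it self-containedly from material recalled in the present paper. Each step checks out: compactness of the Priestley topology turns $\ker X = X$ into $X \ll X$; the translation to $X \in \clopsup(X)$ is sound; and in the delicate converse of your second step you correctly use that for an SL-space the map $U \mapsto U \cap Y$ is a bijection from $\clopup(X)$ onto the opens of $Y$ with inverse $\cl$ (\cref{rem:iso opens Y}), so a compact open $V \subseteq Y$ with $\cl V = X$ must equal $Y$ --- this is indeed where density of $Y$ is genuinely needed. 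Two simplifications are available. Your step~1 is exactly the $a = 1$ instance of \cref{lem:compact} applied to the frame $\clopup(X)$, so it could be quoted rather than reproved; and even within your direct argument the appeal to \cite[Lemma 5.7]{BezhanishviliMelzer2022b} is superfluous, since $X \ll X$ is literally condition (\ref{dagger}) for the closed upset $F = X$, which makes $X$ a Scott upset with no further tool (and sidesteps any question about the hypotheses under which that lemma is stated). What your route buys is a proof of \cref{lem:compact=tight} entirely inside this paper's toolkit (\cref{lem:clopen-scott}, \cref{rem:iso opens Y}, (\ref{dagger})); what the authors' citation buys is brevity.
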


\begin{theorem} \label{thm:CohL-equivalences}
    Let $L$ be an algebraic frame, $X_L$ its Priestley space, and $Y_L$ the spatial part of $X_L$. The following are equivalent.
\begin{enumerate}
    \item $L$ is a coherent frame.
    \item $X_L$ is a coherent L-space.
    \item $Y_L$ is a spectral space.
\end{enumerate}
\end{theorem}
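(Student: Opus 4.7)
The plan is to reduce this three-way equivalence directly to the arithmetic case (\cref{thm:arithmetic}) together with the two compactness translations already in hand. The key observation is that each of the three properties decomposes cleanly as \emph{arithmetic part} $+$ \emph{compactness part}: a coherent frame is a compact arithmetic frame, a coherent L-space is an L-compact arithmetic L-space, and a spectral space is a compact stably compactly based space. So once we match the arithmetic parts via \cref{thm:arithmetic} and the compactness parts via \cref{lem:compact} and \cref{lem:compact=tight}, the equivalences fall out.

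First, for (1)$\Leftrightarrow$(2), I would invoke \cref{thm:arithmetic} (applied to the algebraic frame $L$) to get that $L$ is arithmetic iff $X_L$ is an arithmetic L-space. Then the ``in particular'' clause of \cref{lem:compact} gives that $L$ is compact iff $X_L$ is L-compact. Conjoining these two biconditionals yields $L$ coherent iff $X_L$ is arithmetic and L-compact, which is exactly the definition of a coherent L-space (\cref{def: CohL}).

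Next, for (2)$\Leftrightarrow$(3), I would use the second equivalence in \cref{thm:arithmetic}, which says $X_L$ is arithmetic iff $Y_L$ is stably compactly based. Combining this with \cref{lem:compact=tight} (which applies since $X_L$ is an SL-space, as $L$ is algebraic and hence spatial), we get that $X_L$ is L-compact iff $Y_L$ is compact. Putting these together, $X_L$ is a coherent L-space iff $Y_L$ is stably compactly based and compact, that is, iff $Y_L$ is a spectral space (see the definition in the table of compactly based sober spaces).

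There is no real obstacle here: the groundwork done in \cref{sec: alg frm} and in the previous subsection has already handled all the nontrivial content (the arithmetic $\leftrightarrow$ stably compactly based translation via the Hofmann--Mislove argument in \cref{lem: stably compactly based}, and the correspondence between Scott upsets and compact saturated sets used throughout). The proof is really a bookkeeping step that bundles the arithmetic equivalence together with the compactness equivalences, and the only thing to check is that I am entitled to apply \cref{lem:compact=tight}, which requires $X_L$ to be an SL-space; this holds because every algebraic frame is spatial (as noted in the remark following \cref{frm-dualities}), and Priestley duals of spatial frames are SL-spaces by \cref{thm: SL dualities}.
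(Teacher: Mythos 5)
Your proposal is correct and follows essentially the same route as the paper: both decompose coherence as ``arithmetic $+$ compact'' and match the pieces via \cref{thm:arithmetic}, \cref{lem:compact}, and \cref{lem:compact=tight}. Your extra remark verifying that $X_L$ is an SL-space (since algebraic frames are spatial) is a reasonable care step that the paper leaves implicit.
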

\begin{proof}
    (1)$\Leftrightarrow$(2) $L$ is a coherent frame iff $L$ is a compact arithmetic frame. By \cref{lem:compact,thm:arithmetic}, this is equivalent to 
    $X_L$ being a coherent L-space.
    
    (2)$\Leftrightarrow$(3) 
    By \cref{lem:compact=tight,thm:arithmetic}, $X_L$ is a coherent L-space iff $Y_L$ is a compact stably compactly based space, hence a spectral space.
\end{proof}

As a consequence of \cref{cor: duality for ArithFrm,thm:CohL-equivalences}, we obtain the second main result of this section:

\begin{corollary} \label{cor:cohfrm=CohL=spec}
    \CohL is equivalent to \Spec and dually equivalent to \CohFrm.
\end{corollary}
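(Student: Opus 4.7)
The plan is to derive this corollary as a routine restriction of the equivalence and dual equivalence already obtained in \cref{cor: duality for ArithFrm} for the arithmetic setting. First I would observe that by \cref{def: CohL}, \CohL is a full subcategory of \ArithL; by definition \CohFrm is a full subcategory of \ArithFrm (the objects being compact arithmetic frames with the same coherent-frame-homomorphism morphisms); and by definition \Spec is a full subcategory of \SKBSp. Thus all three categories involved sit inside the categories whose equivalence and dual equivalence we have in hand.

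Next I would invoke \cref{cor: duality for ArithFrm}, which provides a chain of functors realizing \ArithL $\simeq$ \SKBSp and \ArithL $\leftrightsquigarrow$ \ArithFrm. The only work left is to verify that these functors send objects of each of the three full subcategories to objects of the corresponding full subcategory. This is precisely the content of \cref{thm:CohL-equivalences}: for an algebraic frame $L$ with Priestley space $X_L$ and spatial part $Y_L$, being a coherent frame, a coherent L-space, and a spectral space are mutually equivalent. Since the functors from \cref{cor: duality for ArithFrm} are the restrictions of $\functor X$, $\functor D$, and $\functor Y$, the object-level correspondences in \cref{thm:CohL-equivalences} show that they carry \CohFrm to \CohL, \CohL to \Spec, and back.

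Combining fullness of the three subcategories with the object-level correspondence yields the desired restrictions: the equivalence \ArithL $\simeq$ \SKBSp restricts to \CohL $\simeq$ \Spec, and the dual equivalence \ArithFrm $\leftrightsquigarrow$ \ArithL restricts to \CohFrm $\leftrightsquigarrow$ \CohL. No step here should present a genuine obstacle, since the heavy lifting (the object characterization in \cref{thm:CohL-equivalences} and the categorical equivalence in \cref{cor: duality for ArithFrm}) is already done; the only thing to be careful about is verifying that the morphism classes really do match on the nose, which is immediate from fullness of \CohL in \ArithL, \CohFrm in \ArithFrm, and \Spec in \SKBSp.
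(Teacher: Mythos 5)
Your proposal is correct and follows essentially the same route as the paper, which derives the corollary directly from \cref{cor: duality for ArithFrm} together with the object-level characterization in \cref{thm:CohL-equivalences}, using fullness of \CohFrm, \CohL, and \Spec in the respective arithmetic categories. Nothing further is needed.
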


As a corollary we obtain \cref{frm-dualities-coh} that \CohFrm is dually equivalent to \Spec.

\subsection{Stone frames}

Finally, we describe Priestley duals of Stone frames. Stone frames are characterized by having enough complemented elements. In the language of Priestley spaces, complemented elements correspond to clopen upsets that are also downsets (see, e.g., \cite[Lemma 6.1]{BezhanishviliGabelaiaJibladze2016}). 

Let $X$ be a Priestley space. Following \cite[p.~377]{BezhanishviliGabelaiaJibladze2016}, we call a subset of $X$ a \emph{biset} if it is both an upset and a downset. Let $\clopbi(X)$ be the collection of clopen bisets of $X$. 

\begin{definition} \label{def: centered}
    Let $X$ be an L-space.
    \begin{enumerate}
    \item For $U \in \clopup(X)$, define the \emph{center of $U$} as 
    \[
        \cen U = \bigcup \{V \in \clopbi(X) \mid V \subseteq U\}.
    \]
    \item We call $X$ a \emph{zero-dimensional} L-space if for every $U \in \clopup(X)$ we have that $\cen U$ is dense in $U$.
    \item A \emph{Stone} L-space is an L-compact zero-dimensional L-space.
    \item Let \StoneL be the full subcategory of \LPries consisting of Stone L-spaces.
    \end{enumerate}
\end{definition}

\begin{remark}
    In \cite[Definition 6.2]{BezhanishviliGabelaiaJibladze2016}, the center of a clopen upset $U$ is called the biregular part of $U$.
\end{remark}

\begin{lemma} \plabel{lem:center}
    Let $X$ be an L-space and $U \in \clopup(X)$.
    \begin{enumerate}[ref=\thelemma(\arabic*)]
    \item $\cen U \subseteq \reg U$. \clabel{cen<reg}
    \item If $X$ is a zero-dimensional L-space, then $X$ is a regular L-space.
    \item If $X$ is a Stone L-space, then $X$ is a compact regular L-space. \clabel{stone=kr}
    \end{enumerate}
\end{lemma}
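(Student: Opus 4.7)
The plan is to observe that all three parts follow essentially from unwinding definitions, with the only substantive step being the containment in part (1), from which (2) and (3) are immediate by density and by the definition of a compact regular L-space, respectively.

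For part (1), I would show that every clopen biset contained in $U$ is also a clopen upset that is well-inside $U$. Concretely, let $V \in \clopbi(X)$ with $V \subseteq U$. Since $V$ is a biset, it is in particular a clopen upset, so $V \in \clopup(X)$. Moreover, since $V$ is also a downset, $\downset V = V \subseteq U$, which by definition means $V \prec U$. Hence $V \subseteq \reg U$. Taking the union over all such $V$ yields $\cen U \subseteq \reg U$.

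For part (2), assume $X$ is a zero-dimensional L-space and let $U \in \clopup(X)$. By hypothesis $\cen U$ is dense in $U$. From part (1), $\cen U \subseteq \reg U \subseteq U$ (the second inclusion holding because each $V \prec U$ satisfies $V \subseteq \downset V \subseteq U$). Density of $\cen U$ in $U$ therefore forces $\reg U$ to be dense in $U$ as well, so $X$ is a regular L-space.

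For part (3), a Stone L-space is by \cref{def: centered} an L-compact zero-dimensional L-space, so by part (2) it is a regular L-space, and being additionally L-compact it is a compact regular L-space by definition. The only point that required any thought is the observation in (1) that bisets are automatically well-inside any clopen upset containing them; everything else is purely formal.
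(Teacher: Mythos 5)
Your proof is correct and takes essentially the same route as the paper's: part (1) is the only substantive step, and (2) and (3) follow definitionally, exactly as in the paper. The only (harmless) difference is in (1), where you argue directly from the definition $V \prec U \iff \downset V \subseteq U$ together with $\downset V = V$ for a clopen biset, whereas the paper checks the pointwise condition $\downset\upset x \subseteq U$ and cites an external characterization of $\reg U$; your version is, if anything, slightly more self-contained.
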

\begin{proof}
    (1) Suppose $x \in \cen U$. Then there is $V \in \clopbi(X)$ with $x \in V \subseteq U$. Therefore, ${\downarrow\uparrow} x \subseteq U$, so $x \in \reg U$ by \cite[Lemma 7.3(1)]{BezhanishviliMelzer2022b}. 
    
    (2) Suppose $U \in \clopup(X)$. Since $X$ is a zero-dimensional L-space, $\cen U$ is dense in $U$. But then $\reg U$ is dense in $U$ by (1). Thus, $X$ is a regular L-space.
    
    (3) This follows from (2) and \cref{lem:compact}. 
\end{proof}

As an immediate consequence, we obtain that \StoneL is a full subcategory of $\KRL$. We proceed to show that \StoneL is a full subcategory of \CohL.

\begin{lemma}
    Let $X$ be a Stone L-space.
    \begin{enumerate}[ref=\thelemma(\arabic*)]
        \item $\clopsup(X) = \clopbi(X)$. \label[lemma]{lem: clopsup=clobi}
        \item $\cen U = \core U$ for each $U \in \clopup(X)$. \label[lemma]{lem: cen=core}
    \end{enumerate}
\end{lemma}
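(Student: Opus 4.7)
The plan is to reduce both parts to the structural fact that, since $X$ is a Stone L-space, the frame $L = \clopup(X)$ is compact regular (by \cref{lem:center-stone=kr}). In a compact regular frame, the relations $\ll$ and $\prec$ coincide, so compact and complemented elements agree: $K(L) = C(L)$.

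For (1), I would establish two separate correspondences translating set-theoretic properties of clopen upsets of $X$ into lattice-theoretic properties in $L$. The first, $\clopsup(X) = K(L)$, is already given by \cref{lem:compact}. For the second, $\clopbi(X) = C(L)$, the key observation is that a clopen upset $V$ admits a complement in $L$ exactly when $X \setminus V$ is itself a clopen upset, equivalently $V$ is also a downset. In one direction, if $V \in \clopbi(X)$, then $X \setminus V$ is a clopen upset, and the identities $V \cup (X \setminus V) = X$, $V \cap (X \setminus V) = \emptyset$ exhibit $X \setminus V$ as a complement of $V$. Conversely, if $V \in C(L)$ then the frame-theoretic join $V \vee V^*$ equals $X$; since joins in $\clopup(X)$ are closures of unions and $V \cup V^*$ is already closed (a finite union of clopens), we conclude $V \cup V^* = X$, so $V^* = X \setminus V$ is a clopen upset, and hence $V$ is a downset. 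Combining the two correspondences with $K(L) = C(L)$ yields $\clopsup(X) = \clopbi(X)$.

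Part (2) is then immediate: by definition, $\cen U$ and $\core U$ are respectively the unions of the family of clopen bisets contained in $U$ and the family of clopen Scott upsets contained in $U$, and part (1) says these families coincide.

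I do not anticipate a serious obstacle. The one mildly delicate point is verifying that the frame-theoretic pseudocomplement of a complemented $V$ literally coincides with the set-theoretic complement $X \setminus V$; this reduces to the fact that joins in $\clopup(X)$ are closures of unions together with $V \cup V^*$ being already closed. This is standard and can also be extracted from \cite[Lemma 6.1]{BezhanishviliGabelaiaJibladze2016}.
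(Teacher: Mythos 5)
Your proof is correct, and for part (1) it takes a genuinely different route from the paper. The paper's proof of (1) is short by outsourcing: it notes that a Stone L-space is a compact regular L-space (\cref{lem:center-stone=kr}) and then cites \cite[Lemma 7.15(4)]{BezhanishviliMelzer2022b}, which states precisely that in compact regular L-spaces clopen Scott upsets and clopen bisets coincide. You instead re-derive this fact by passing to the frame $L=\clopup(X)$: you use $K(L)=C(L)$ in compact regular frames together with the two dictionary entries $\clopsup(X)\leftrightarrow K(L)$ (\cref{lem:compact}) and $\clopbi(X)\leftrightarrow C(L)$ (your direct computation with finite joins being unions, or \cite[Lemma 6.1]{BezhanishviliGabelaiaJibladze2016}). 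This buys a self-contained argument within the material recalled in this paper, at the cost of routing through the frame side; the only citation adjustment needed is that \cref{lem:center-stone=kr} literally says that $X$ is a compact regular \emph{L-space}, and to conclude that the frame $\clopup(X)$ is compact regular you should invoke the dual equivalence between \KRL and \KRFrm recalled in Section 3 (identifying $X$ with the Priestley space of $\clopup(X)$ via Pultr--Sichler duality) -- a minor point, not a gap. For (2), your observation that, once (1) is known, the defining families of $\cen U$ and $\core U$ literally coincide, so their unions are equal, is correct and in fact simpler than the paper's argument, which instead verifies $V\subseteq\cen U\iff V\subseteq\core U$ for clopen upsets $V$ using compactness of $V$ and closure of bisets and Scott upsets under finite unions.
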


\begin{proof}
(1) Since $X$ is a Stone L-space, it is a compact regular L-space by \cref{lem:center-stone=kr}. Now apply \cite[Lemma 7.15(4)]{BezhanishviliMelzer2022b}.
    
    (2) It suffices to show that for each clopen upset $V$ we have $V \subseteq \cen U$ iff $V \subseteq \core U$.
    Since $V$ is compact, finite unions of bisets are bisets, and finite unions of Scott upsets are Scott upsets, (1) implies
    \begin{align*}
        V \subseteq \cen U
        &\iff \exists W \in \clopbi(X) : V \subseteq W \subseteq U \\
        &\iff \exists W \in \clopsup(X) : V \subseteq W \subseteq U\\
        &\iff V \subseteq \core U. \qedhere
    \end{align*}
\end{proof}

\begin{theorem} \label{thm: full sub StoneL}
    \StoneL is a full subcategory of \CohL.
\end{theorem}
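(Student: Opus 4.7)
The plan is to unpack the definition \emph{coherent L-space = L-compact arithmetic L-space = L-compact kernel-stable algebraic L-space} and verify each piece for a Stone L-space $X$, then handle the morphisms by reducing to \cref{lem:proper-coherent-3}.

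First I would show that $X$ is an algebraic L-space. By definition of a Stone L-space, $\cen U$ is dense in $U$ for every $U \in \clopup(X)$. But \cref{lem: cen=core} gives $\cen U = \core U$, so $\core U$ is dense in $U$, which is exactly the definition of an algebraic L-space.

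Next I would obtain L-compactness and kernel-stability in one stroke via \cref{lem:center-stone=kr}: every Stone L-space is a compact regular L-space, hence lies in $\KRL$. By the remark following the definition of $\KRL$, $\KRL$ is a full subcategory of $\SKL$, so $X$ is stably compact, in particular L-compact and kernel-stable. Combined with the previous paragraph, $X$ is an L-compact kernel-stable algebraic L-space, which is the definition of a coherent L-space. This shows every object of $\StoneL$ is an object of $\CohL$.

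For the morphisms, let $f : X_1 \to X_2$ be an L-morphism between Stone L-spaces. Since $X_1,X_2$ are compact regular L-spaces (again by \cref{lem:center-stone=kr}), the same remark about $\KRL$ tells us $f$ is automatically proper. Since $X_1,X_2$ are also algebraic L-spaces, \cref{lem:proper-coherent-3} yields that $f$ is coherent. Thus every L-morphism between Stone L-spaces is a coherent L-morphism, so the inclusion $\StoneL \hookrightarrow \CohL$ is full, completing the proof.

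No step is really the main obstacle here; all the work was done in the preceding lemmas, and what remains is bookkeeping. The only subtlety is remembering to invoke \cref{lem:proper-coherent-3} (rather than trying to verify coherence of $f$ directly from the definition) so that one does not have to argue about $\core$ of preimages by hand.
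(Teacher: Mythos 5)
Your proposal is correct, and on the morphism side it coincides with the paper's proof: properness of any L-morphism between Stone L-spaces is obtained through $\StoneL\leqslant\KRL$ (via \cite[Theorem 7.18(2)]{BezhanishviliMelzer2022b}), and then \cref{lem:proper-coherent-3} upgrades properness to coherence. On the object side both you and the paper get algebraicity from \cref{lem: cen=core}, so the only divergence is how the arithmetic (kernel-stable) condition is secured. The paper's proof cites only \cref{lem: cen=core}, leaving that condition implicit; the natural completion of its route is \cref{lem: clopsup=clobi} together with \cref{lem:scottstable}, since clopen bisets are closed under finite intersections, so clopen Scott upsets are too. You instead invoke \cref{lem:center-stone=kr} and the fact that $\KRL$ is a full subcategory of $\SKL$, so a Stone L-space is a stably compact L-space and in particular kernel-stable (your detour through this for L-compactness is redundant, as L-compactness is part of the definition of a Stone L-space, but harmless). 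Both routes are sound and rest on already-proved results; yours leans on the compact-regular machinery of the earlier paper, while the paper's implicit route stays within the Scott-upset lemmas of this section, and yours has the merit of spelling out the kernel-stability step that the paper's one-line citation glosses over.
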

\begin{proof}
    Every Stone L-space is a coherent L-space by \cref{lem: cen=core}.  
    Also, since $\StoneL$ is a full subcategory of \KRL, 
    every L-morphism between Stone L-spaces is a proper L-morphism by \cite[Theorem 7.18(2)]{BezhanishviliMelzer2022b}. Therefore, every such morphism  
    is a coherent L-morphism by \cref{lem:proper-coherent-3}. Thus, \StoneL is a full subcategory of \CohL.
\end{proof}
In \cite[Theorem 6.3(1)]{BezhanishviliGabelaiaJibladze2016} it is shown that Priestley duals of zero-dimensional frames are exactly zero-dimensional L-spaces. We connect zero-dimensional L-spaces to zero-dimensional topological spaces.

\begin{lemma} \label{lem:clopbi=clop}
    Let $X$ be an L-space and $Y$ its spatial part.
    \begin{enumerate}[ref=\thelemma(\arabic*)]
    \item If $U \in \clopbi(X)$, then $U \cap Y$ is clopen in $Y$. \label[lemma]{lem:clopbi=clop-1}
    \item If $X$ is an SL-space and $V \subseteq Y$ is clopen, then there is $U \in \clopbi(X)$ such that $V = U \cap Y$. \label[lemma]{lem:clopbi=clop-2}
\end{enumerate}
\end{lemma}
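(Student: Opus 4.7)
The plan is to treat the two parts separately, since they are essentially converses and rely on different facts.

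For part (1), the key observation is that if $U$ is a clopen biset, then its complement $X\setminus U$ is also a clopen biset: complements of bisets are bisets, and complements of clopens are clopen. In particular, $X\setminus U\in\clopup(X)$ as well, so by the definition of the topology on $Y$, both $U\cap Y$ and $(X\setminus U)\cap Y=Y\setminus(U\cap Y)$ are open in $Y$. This immediately yields that $U\cap Y$ is clopen in $Y$. This part is essentially a one-line argument.

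For part (2), the plan is to start with a clopen $V\subseteq Y$ and lift both $V$ and $Y\setminus V$ via the isomorphism in \cref{rem:iso opens Y}: there exist unique $U,U'\in\clopup(X)$ with $U\cap Y=V$ and $U'\cap Y=Y\setminus V$. I will then show that $U$ is a clopen biset by proving $U'=X\setminus U$; once this is established, $X\setminus U$ is an upset, forcing $U$ to be a downset as well. To prove $U'=X\setminus U$, I need two inclusions, that is, $U\cap U'=\emptyset$ and $U\cup U'=X$. The disjointness follows directly from the isomorphism: $(U\cap U')\cap Y=V\cap(Y\setminus V)=\emptyset$, and since $U\cap U'\in\clopup(X)$ corresponds uniquely to $\emptyset\subseteq Y$, we get $U\cap U'=\emptyset$.

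The step I expect to be the main obstacle is showing $U\cup U'=X$, because $U\cup U'$ is an open upset but is not a priori a clopen upset, so the isomorphism of \cref{rem:iso opens Y} cannot be applied directly. The fix will be a density argument: since $Y$ is dense in $X$, one has $\cl((U\cup U')\cap Y)=\cl(V\cup(Y\setminus V))=\cl(Y)=X$; on the other hand, distributing closure over the finite union and using $\cl(U\cap Y)=U$ and $\cl(U'\cap Y)=U'$ (as guaranteed by the SL-space property in \cref{rem:iso opens Y}) gives $\cl((U\cup U')\cap Y)=U\cup U'$. Combining these two identities yields $U\cup U'=X$, which completes the argument and produces the desired clopen biset.
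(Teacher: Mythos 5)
Your proof is correct and follows essentially the same route as the paper: part (1) is immediate from the definition of the topology on $Y$, and for part (2) you lift $V$ and $Y\setminus V$ to clopen upsets $U,U'$ via \cref{rem:iso opens Y} and show $U'=X\setminus U$, using density of $Y$ for the union. The only cosmetic difference is the disjointness step, where the paper cites the fact that closure commutes with finite intersections of opens of $Y$, while you instead apply the identity $\cl\bigl((U\cap U')\cap Y\bigr)=U\cap U'$ to the clopen upset $U\cap U'$; both arguments are valid.
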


\begin{proof}
    (1) This is immediate.

    (2) Let $V \subseteq Y$ be clopen. Since $V$ is open, there is $U \in \clopup(X)$ such that $V = U \cap Y$ and $\cl V= U$ (see \cref{rem:iso opens Y}). Similarly, since $V$ is closed, there is $W \in \clopup(X)$ such that $Y \setminus V = W \cap Y$ and $\cl (Y \setminus V) = W$. We have 
    \[
    U \cap W = \cl(V) \cap \cl(Y\setminus V) = \cl(V\cap(Y\setminus V)) = \varnothing,
    \] 
    where the second equality follows from \cite[Lemma 4.15]{BezhanishviliMelzer2022b} because $V,Y \setminus V$ are open in $Y$. Also, 
    \[
    U \cup W = \cl V \cup \cl(Y \setminus V) = \cl(V \cup (Y \setminus V)) = \cl Y = X.
    \]
        Thus, $U = X \setminus W$, and hence $U \in \clopbi(X)$.
\end{proof}

\begin{theorem} \label{lem:zerodim-sp}
    Let $X$ be an L-space and $Y$ its spatial part. 
    \begin{enumerate}[ref=\thetheorem(\arabic*)]
        \item If $X$ is a zero-dimensional L-space, then $Y$ is zero-dimensional.
        \item If $X$ is an SL-space, then $X$ is a zero-dimensional L-space iff $Y$ is zero-dimensional. \label[theorem]{lem:zerodim-sp-2}
\end{enumerate}
\end{theorem}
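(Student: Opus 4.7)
The plan is to prove (1) by the same pointwise-extraction technique used in \cref{lem:scott-ext} (specifically the (2)$\Rightarrow$(3) step), now with clopen bisets in place of clopen Scott upsets, and then to prove (2) by using the full SL-space correspondence between clopen upsets of $X$ and open sets of $Y$ together with \cref{lem:clopbi=clop}.

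For (1), let $V\subseteq Y$ be open and $y\in V$. By the definition of the topology on $Y$ there is $U\in\clopup(X)$ with $V = U\cap Y$, so $y\in U\cap Y$. Because $X$ is zero-dimensional, $\cen U$ is dense in $U$, so $U\subseteq\cl(\cen U)$. Since $\cen U$ is a union of clopen bisets, it is an open upset of $X$; then, exactly as in the (2)$\Rightarrow$(3) argument in \cref{lem:scott-ext} (invoking \cite[Lemma 4.14(1)]{BezhanishviliMelzer2022b}), we get $\cl(\cen U)\cap Y = \cen U\cap Y$. Thus $y\in\cen U$, so there exists $W\in\clopbi(X)$ with $y\in W\subseteq U$. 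By \cref{lem:clopbi=clop-1}, $W\cap Y$ is clopen in $Y$, and clearly $y\in W\cap Y\subseteq U\cap Y = V$. Hence $Y$ is zero-dimensional.

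For (2), the forward implication is (1), so suppose $X$ is an SL-space and $Y$ is zero-dimensional; I show $X$ is a zero-dimensional L-space. Let $U\in\clopup(X)$. By \cref{rem:iso opens Y}, $U = \cl(U\cap Y)$, so to obtain density of $\cen U$ in $U$ it suffices to prove $U\cap Y\subseteq\cen U$. Let $y\in U\cap Y$. Then $U\cap Y$ is open in $Y$, so by zero-dimensionality of $Y$ there is a clopen subset $V'\subseteq Y$ with $y\in V'\subseteq U\cap Y$. By \cref{lem:clopbi=clop-2}, there is $W\in\clopbi(X)$ with $V' = W\cap Y$. Since $X$ is an SL-space and $W\cap Y\subseteq U\cap Y$, applying closure in $X$ and \cref{rem:iso opens Y} again yields $W = \cl(W\cap Y)\subseteq\cl(U\cap Y) = U$. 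Hence $y\in W\subseteq U$ with $W\in\clopbi(X)$, so $y\in\cen U$.

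The main obstacle is really in part (1), namely justifying the pointwise passage from density of $\cen U$ in $U$ to a clopen-biset neighborhood inside $U$ for each $y\in U\cap Y$, which requires the identity $\cl(\cen U)\cap Y=\cen U\cap Y$ for the open upset $\cen U$; once this is in hand, both parts fall out cleanly from \cref{lem:clopbi=clop} and \cref{rem:iso opens Y}. Notice that part (1) does not actually use the SL-space hypothesis, while (2) uses it twice: once to extend $V'$ to a clopen biset $W$ of $X$, and once to conclude $W\subseteq U$ via $W=\cl(W\cap Y)$.
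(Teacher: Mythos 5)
Your proposal is correct and follows essentially the same route as the paper's own proof: part (1) via density of $\cen U$ and the identity $\cl(\cen U)\cap Y=\cen U\cap Y$ for the open upset $\cen U$, then \cref{lem:clopbi=clop-1}; part (2) via $U=\cl(U\cap Y)$, zero-dimensionality of $Y$, \cref{lem:clopbi=clop-2}, and $W=\cl(W\cap Y)\subseteq U$. Your closing observation that (1) needs no SL-space hypothesis while (2) uses it twice matches the paper's formulation exactly.
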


\begin{proof}
    (1) Suppose $X$ is a zero-dimensional L-space. Let $V \subseteq Y$ be open and $y \in V$. Then there is $U \in \clopup(X)$ such that $U \cap Y = V$. Since $\cen U$ is dense in $U$, we have $U \cap Y = \cl (\cen U) \cap Y = \cen U \cap Y$, where the last equality follows from \cite[Lemma 4.14(1)]{BezhanishviliMelzer2022b} because $\cen U$ is an open upset of $X$. 
    Therefore, there is $W \in\clopbi(X)$ such that $y \in W \subseteq U$. Thus, $y \in W \cap Y \subseteq V$ and $W \cap Y$ is clopen in $Y$ by \cref{lem:clopbi=clop-1}. 
    Consequently, $Y$ is zero-dimensional.
    
     (2) The left-to-right implication follows from (1). For the converse implication, suppose $Y$ is zero-dimensional. Let $U \in \clopup(X)$. Since $X$ is an SL-space, $U\cap Y$ is dense in $U$. Therefore, it suffices to show that $U \cap Y \subseteq \cen U$. Let $y \in U \cap Y$. Since $U \in \clopup(X)$, we have that $U \cap Y$ is open in $Y$. Because $Y$ is zero-dimensional, there is clopen $V \subseteq Y$ such that $y \in V \subseteq U \cap Y$. Since $V$ is clopen in $Y$, \cref{lem:clopbi=clop-2} implies that there is $W \in \clopbi(X)$ such that $V = W \cap Y$. Because $X$ is an SL-space, $\cl V = W$, and hence $y \in W \subseteq U$. Thus, $y \in \cen U$.
\end{proof}

\begin{corollary}\label{thm:zerodim}
    Let $L$ be a spatial frame, $X_L$ its Priestley space, and $Y_L$ the spatial part of $X_L$. The following are equivalent.
    \begin{enumerate}
        \item $L$ is a zero-dimensional frame.
        \item $X_L$ is a zero-dimensional L-space.
    \end{enumerate}
    If in addition $L$ is spatial, then \textup{(1)} and \textup{(2)} are equivalent to
    \begin{enumerate}[resume]
        \item $Y_L$ is a zero-dimensional space.
    \end{enumerate}
\end{corollary}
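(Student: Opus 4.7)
The plan is to assemble the three-way equivalence by combining one external citation with the preceding theorem.  No new arguments are needed; the corollary is essentially a bookkeeping step.

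First, the equivalence (1)$\Leftrightarrow$(2) holds for an arbitrary frame $L$ and follows from \cite[Theorem 6.3(1)]{BezhanishviliGabelaiaJibladze2016}, which was cited in the paragraph preceding \cref{lem:clopbi=clop} as the characterization of Priestley duals of zero-dimensional frames.  Spatiality plays no role in this step.

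Next, to obtain (2)$\Leftrightarrow$(3) when $L$ is spatial, I would invoke \cref{thm: SL dualities} to conclude that $X_L$ is an SL-space whenever $L \in \SFrm$.  With this in hand, \cref{lem:zerodim-sp-2} applies to $X=X_L$ with spatial part $Y=Y_L$ and gives directly that $X_L$ is a zero-dimensional L-space iff $Y_L$ is a zero-dimensional topological space.  Concatenating this biconditional with the one from the previous paragraph yields the full three-way equivalence.

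There is no real obstacle.  The only thing to note is that \cref{lem:zerodim-sp} has two parts: part (1) needs no SL-hypothesis, but the reverse direction genuinely requires $X$ to be an SL-space, which is why spatiality of $L$ is imposed in the extension to (3).  If instead one wanted to drop spatiality from the hypothesis of the corollary (the ``Let $L$ be a spatial frame\dots'' clause is arguably stronger than needed for (1)$\Leftrightarrow$(2) alone), it suffices to apply \cite[Theorem 6.3(1)]{BezhanishviliGabelaiaJibladze2016} directly without passing through $Y_L$.
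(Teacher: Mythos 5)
Your proposal is correct and follows the same route as the paper: (1)$\Leftrightarrow$(2) is exactly the citation of \cite[Theorem 6.3(1)]{BezhanishviliGabelaiaJibladze2016}, and (2)$\Leftrightarrow$(3) is exactly an application of \cref{lem:zerodim-sp-2}, with your appeal to \cref{thm: SL dualities} merely making explicit the (implicit) fact that $X_L$ is an SL-space when $L$ is spatial. Your side remark that spatiality is not needed for (1)$\Leftrightarrow$(2) is also consistent with how the statement is phrased ("If in addition $L$ is spatial").
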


\begin{proof}
    That (1)$\Leftrightarrow$(2) follows from \cite[Theorem 6.3(1)]{BezhanishviliGabelaiaJibladze2016}, and that (2)$\Leftrightarrow$(3) follows from  \cref{lem:zerodim-sp-2}.
\end{proof}

\begin{corollary} \label{thm:StoneL=stone}
    Let $L$ be a frame, $X_L$ its Priestley space, and $Y_L$ the spatial part of $X_L$. The following are equivalent.
    \begin{enumerate}
        \item $L$ is a Stone frame.
        \item $X_L$ is a Stone L-space.
    \end{enumerate}
    If in addition $L$ is spatial, then \textup{(1)} and \textup{(2)} are equivalent to
    \begin{enumerate}[resume]
        \item $Y_L$ is a Stone space.
    \end{enumerate}
\end{corollary}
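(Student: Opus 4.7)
The plan is to assemble the corollary directly from the characterizations that have already been established in the paper, since each of ``Stone frame'', ``Stone L-space'', and ``Stone space'' decomposes into compactness (in the appropriate sense) together with zero-dimensionality (and, in the topological case, Hausdorffness). The argument will therefore be the conjunction of two essentially independent translations.

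For (1)$\Leftrightarrow$(2), I would combine the compactness part of \cref{lem:compact} (namely that $L$ is compact iff $X_L$ is L-compact) with the zero-dimensionality equivalence that was just proved in \cref{thm:zerodim}, which gives that $L$ is zero-dimensional iff $X_L$ is a zero-dimensional L-space. Taking the conjunction of both halves yields exactly the biconditional between $L$ being a Stone frame and $X_L$ being a Stone L-space, with no further work required.

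For (1)$\Leftrightarrow$(3) under the extra hypothesis that $L$ is spatial (so that $X_L$ is an SL-space and \cref{lem:compact=tight} is applicable), I would apply \cref{lem:compact=tight} to translate L-compactness of $X_L$ into compactness of $Y_L$, and \cref{lem:zerodim-sp-2} to translate zero-dimensionality of $X_L$ into zero-dimensionality of $Y_L$. Combining these with the already-established (1)$\Leftrightarrow$(2) gives that $L$ is a Stone frame iff $Y_L$ is compact and zero-dimensional.

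The only step not delivered off-the-shelf by an earlier lemma is the observation that ``compact zero-dimensional'' for $Y_L$ automatically upgrades to ``Stone space'', i.e.\ that $Y_L$ is Hausdorff. This is a small routine remark rather than a real obstacle: the spatial part of any L-space is sober (hence $T_0$), and any $T_0$ space with a basis of clopen sets is Hausdorff, because two distinct points can be separated by a clopen from the basis containing one but not the other. With this aside, the corollary is a direct bookkeeping assembly of \cref{lem:compact}, \cref{lem:compact=tight}, and \cref{thm:zerodim}.
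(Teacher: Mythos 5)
Your proposal is correct and follows essentially the same route as the paper: the paper also proves (1)$\Leftrightarrow$(2) by combining \cref{lem:compact} with \cref{thm:zerodim}, and handles the spatial case by translating L-compactness and zero-dimensionality to $Y_L$ via \cref{lem:compact=tight} and \cref{lem:zerodim-sp-2}. Your explicit remark that sobriety ($T_0$) plus a clopen basis yields Hausdorffness is a small detail the paper leaves implicit, and it is argued correctly.
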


\begin{proof}
    (1)$\Leftrightarrow$(2) Apply \cref{lem:compact,thm:zerodim}. 
    
    (2)$\Leftrightarrow$(3) Apply \cref{lem:compact=tight,thm:zerodim}. 
\end{proof}

As an immediate consequence, we arrive at the last main result of this section:

\begin{corollary}
    \StoneL is equivalent to \Stone and dually equivalent to \StoneFrm.
\end{corollary}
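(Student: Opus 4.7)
The plan is to derive this corollary by restricting the equivalence and dual equivalence of \cref{cor:cohfrm=CohL=spec} to the relevant full subcategories. First I would record the three full-subcategory inclusions: \StoneFrm is a full subcategory of \CohFrm by \cref{rem: full sub stonefrm}, \StoneL is a full subcategory of \CohL by \cref{thm: full sub StoneL}, and \Stone is a full subcategory of \Spec by \cref{rem: full sub stone}. Since a restriction of an equivalence to full subcategories that correspond on objects is again an equivalence, it suffices to verify that the functors $\functor X : \CohFrm \to \CohL$, $\functor D : \CohL \to \CohFrm$, and $\functor Y : \CohL \to \Spec$ (together with its quasi-inverse) carry Stone objects to Stone objects.

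The object-level correspondence is supplied entirely by \cref{thm:StoneL=stone}: for a frame $L$ with Priestley space $X_L$ and spatial part $Y_L$, being a Stone frame is equivalent to $X_L$ being a Stone L-space and, since every coherent frame is spatial, also equivalent to $Y_L$ being a Stone space. Applied to $L \in \StoneFrm$, this shows $\functor X$ restricts to a functor $\StoneFrm \to \StoneL$ and, composing with $\functor Y$, yields $\StoneFrm \to \Stone$. Applied instead to $L = \clopup(X) = \functor D(X)$ for a Stone L-space $X$ (so that $X \cong X_L$ under Pultr--Sichler duality), it shows that $\functor D$ restricts to $\StoneL \to \StoneFrm$. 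The equivalence $(2) \Leftrightarrow (3)$ of the same theorem then shows $\functor Y$ restricts to $\StoneL \to \Stone$ and its quasi-inverse to $\Stone \to \StoneL$.

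These restrictions inherit full faithfulness from the ambient equivalences (fullness passes through full subcategories, and faithfulness is automatic), while the essential surjectivity is exactly the object correspondence furnished by \cref{thm:StoneL=stone}. Combining these observations yields $\StoneL \simeq \Stone$ and the dual equivalence $\StoneL \simeq \StoneFrm^{\mathrm{op}}$, completing the proof. The only point requiring minor attention is morphism compatibility at the boundary: morphisms in \StoneFrm are arbitrary frame homomorphisms, but by \cref{rem: full sub stonefrm} these are automatically coherent; morphisms in \StoneL are arbitrary L-morphisms, but by \cref{thm: full sub StoneL} these are automatically coherent L-morphisms; and continuous maps between Stone spaces are automatically coherent since preimages of compact opens are clopen, hence compact. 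There is no real obstacle here, as the substantive work was already carried out in \cref{thm:StoneL=stone}.
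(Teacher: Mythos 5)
Your proposal is correct and follows essentially the same route as the paper: the paper likewise deduces the result from \cref{cor:cohfrm=CohL=spec}, the object-level characterization in \cref{thm:StoneL=stone}, and the full-subcategory inclusions of \cref{rem: full sub stonefrm,thm: full sub StoneL,rem: full sub stone}. Your extra verifications (functor restrictions, morphism compatibility) are just the details implicit in restricting an equivalence to corresponding full subcategories.
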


\begin{proof}
    This follows from \cref{cor:cohfrm=CohL=spec,thm:StoneL=stone} and the observation that \StoneFrm, \StoneL, and \Stone are full subcategories of \CohFrm, \CohL, and \Spec, respectively (see \cref{rem: full sub stonefrm,thm: full sub StoneL,rem: full sub stone}). 
\end{proof}

\cref{frm-dualities-stone}, which states that \StoneFrm is dually equivalent to \Stone, is now immediate from the above corollary.

\begin{remark}
Let $L$ be a frame and $X_L$ its Priestley space. As we saw in this paper, there are various maps from the clopen upsets of $X_L$ to the open upsets of $X_L$, and the corresponding density conditions are responsible for various properties of $L$. In particular,
\begin{itemize}
\item $L$ is continuous iff $\ker U$ is dense in $U$ for each $U\in\clopup(X)$;
\item $L$ is algebraic iff $\core U$ is dense in $U$ for each $U\in\clopup(X)$;
\item $L$ is regular iff $\reg U$ is dense in $U$ for each $U\in\clopup(X)$;
\item $L$ is zero-dimensional iff $\cen U$ is dense in $U$ for each $U\in\clopup(X)$.
\end{itemize}
The strength of these properties of frames is then described by how these maps interact. For example, $\core U\subseteq\ker U$ for each $U\in\clopup(X)$ indicates that every algebraic frame is continuous, etc.
\end{remark}

To summarize, we have the following diagram, where we use the same notation as in the previous diagrams. An overview of the introduced categories of Priestley spaces is given in \cref{table: algebraic spaces}. The corresponding categories of
frames and spaces are described in \cref{table:alg frames,table:kb spaces}.

\begin{figure}[H] 
\centering
\begin{tikzcd}[ampersand replacement=\&, column sep={8em,between origins}, row sep=1em]
    \AlgFrm \ardual{r} \& \AlgL \areqv{r}  \& \KBSob  \\
    \ArithFrm  \ardual{r}\arrestrict \& \ArithL \areqv{r}\arrestrict \& \SKBSp \arrestrict\\
    \CohFrm  \ardual{r}\arrestrict \& \CohL \areqv{r}\arrestrict\& \Spec \arrestrict\\
    \StoneFrm  \ardual{r} \arrestrict \& \StoneL \areqv{r} \arrestrict\& \Stone\arrestrict
\end{tikzcd}
\caption{Equivalences and dual equivalences between various categories of algebraic frames, algebraic L-spaces, and compactly based sober spaces.\label{diagram 2}}
\end{figure}
\begin{table}[H]
\centering
\begin{tabular}{llll}
    \toprule
    \multicolumn{1}{c}{\bf Category} & \multicolumn{1}{c}{\bf Objects} & \multicolumn{1}{c}{\bf Morphisms} \\ \midrule
    \AlgL & algebraic L-spaces (Definition \ref{def: bunched}) & coherent L-morphisms (Definition \ref{def: L-coherent})\\
    \ArithL & arithmetic L-spaces (Definition \ref{def: ArithL}) & coherent L-morphisms \\
    \CohL & coherent L-spaces (Definition \ref{def: CohL}) & coherent L-morphisms \\
    \StoneL & Stone L-spaces (Definition \ref{def: centered}) & L-morphisms \\
    \bottomrule
\end{tabular}
\caption{Categories of algebraic L-spaces.\label{table: algebraic spaces}}
\end{table}

We conclude the paper by connecting the results obtained above with Priestley duality for bounded distributive lattices and Stone duality for boolean algebras.

\begin{remark}
    Let $L$ be a coherent frame, $X_L$ its Priestley space, and $Y_L$ the spatial part of $X_L$. As we pointed out in the Introduction, $K(L)$ is a bounded distributive lattice and $L$ is isomorphic to the frame of ideals of $K(L)$. Moreover, $P \mapsto P\cap K(L)$ is an isomorphism between $(Y_L,\subseteq)$ and the poset of prime filters of $K(L)$. However, the Priestley topology on $X_{K(L)}$ does {\em not} correspond to the restriction to $Y_L$ of the Priestley topology on $X_L$. 
    Indeed, let $\varphi_{K_L} : K(L) \to \clopup(X_{K(L)})$ be the Stone map. By identifying $X_{K(L)}$ with $Y_L$, we have $\varphi_{K(L)}(k) = \varphi(k) \cap Y_L$ 
    for each $k \in K(L)$. Since $X_{K(L)}$ has $\{\varphi_{K(L)}(k_1) \setminus \varphi_{K(L)}(k_2) \mid k_1,k_2 \in K(L) \}$ as a basis and $\clopsup(X)$ corresponds to $K(L)$ by \cref{lem:compact}, the topology on $Y_L$ corresponding to the Priestley topology on $X_{K(L)}$
    is generated by the basis $\{(A \setminus B) \cap Y_L \mid A, B \in \clopsup(X_L)\}$. 
    
    If $L$ is a Stone frame, then $K(L)$ is the set of complemented elements of $L$ (that is, $a\vee a^*=1$). 
    Therefore, $K(L)$ is a boolean algebra and $Y_L = \min X_L$ (see, e.g., \cite[Lemma 7.15(5)]{BezhanishviliMelzer2022b}). But again, the Stone topology on $X_{K(L)}$ is not the restriction to $Y_L$ of the Priestley topology on $X_L$. In fact, the restriction of the Priestley topology on $X_L$ is the discrete topology on $Y_L$ (because $\downset y = \{y\}$ is open for each $y \in Y_L$), while the topology on $Y_L$ corresponding to the Stone topology on $X_{K(L)}$ is generated by $\{A \cap Y_L \mid A \in \clopbi(X_L)\}$ (by the previous paragraph and \cref{lem: clopsup=clobi}).
\end{remark}

\bibliographystyle{abbrv}

\end{document}